%
%
%
%

\def\benm{\begin{enumerate}}
\def\eenm{\end{enumerate}}

\def\bal{\begin{align}}
\def\eal{\end{align}}

\documentclass[11pt]{amsart}
\usepackage{amsmath, amsthm, amscd, amsfonts,amssymb, graphicx}
\usepackage{color}
\newtheorem{theorem}{Theorem}[section]

\theoremstyle{definition}

\newtheorem{proposition}[theorem]{Proposition}
\newtheorem{corollary}[theorem]{Corollary}

\theoremstyle{remark}
\newtheorem{remark}[theorem]{Remark}

\numberwithin{equation}{section}

\newcommand{\dd}{\mathrm{d}}



%
\textheight=22.5cm
\textwidth=14.5cm
\oddsidemargin=1.2cm
\evensidemargin=1.2cm
\topmargin=-0.8cm
    
%
\begin{document}

\title[Covariant Functions of Characters of Normal Subgroups]{Harmonic Analysis of Covariant Functions of Characters of Normal Subgroups}

\author[A. Ghaani Farashahi]{Arash Ghaani Farashahi}
\address{Department of Pure Mathematics, School of Mathematics, University of Leeds, Leeds LS2 9JT, United Kingdom}
\email{a.ghaanifarashahi@leeds.ac.uk}
\email{ghaanifarashahi@outlook.com}

\curraddr{}




\subjclass[2010]{Primary 43A15, 43A20, 43A85.}



\keywords{covariance property, normal subgroup, covariant function, character.}
\thanks{E-mail addresses: a.ghaanifarashahi@leeds.ac.uk (Arash Ghaani Farashahi) }

\begin{abstract}
Let $G$ be a locally compact group with the group algebra $L^1(G)$ and $N$ be a closed normal subgroup of $G$. Suppose that $\xi:N\to\mathbb{T}$ is a continuous character and $L_\xi^1(G,N)$ is the $L^1$-space of all covariant functions of $\xi$ on $G$. We showed that $L^1_\xi(G,N)$ is isometrically isomorphic to a quotient space of $L^1(G)$. It is also proved that the dual space $L^1_\xi(G,N)^*$ is isometrically isomorphic to $L^\infty_\xi(G,N)$. 
\end{abstract}

\maketitle

\section{\bf{Introduction}}
The covariant functions of characters (one-dimensional continuous irreducible unitary representations) of closed subgroups arise as building blocks with different applications in variant areas such as number theory (automorphic forms), induced representations, homogeneous spaces,  complex (hypercomplex) analysis, coherent states, and covariant analysis, see \cite{RB, FollP, kan.taylor, kisil.book, kisil1, MackII, MackI, pere}. 

In general, classical harmonic analysis methods cannot be applied for covariant functions of characters of arbitrary closed subgroups. In the case of a compact subgroup, harmonic analysis on covariant functions studied in \cite{AGHF.cov.com}. The following paper presents a unified operator theoretic approach to study abstract harmonic analysis on covariant functions of characters of closed normal subgroups. We consider $L^1$-spaces of covariant functions of characters of normal subgroups in locally compact groups. It is discussed that the introduced approach extends techniques used in \cite{AGHF.cov.com} when the subgroup is normal and compact. The presented approach also generalizes classical methods of abstract harmonic analysis and functional analysis of $L^1$-spaces on quotient (factor) groups by considering the character as the trivial character of the normal subgroup, see \cite{Bour, Brac, der1983, 50}. 

This article contains 5 sections and organized as follows. Section  2  is  devoted  to  fix  notations  and  provides  a  summary
of classical harmonic analysis on locally compact compact groups and factor (quotient) groups of locally compact groups. 
Let $G$ be a locally compact group with the group algebra $L^1(G)$ 
and $N$ be a closed normal subgroup of $G$. Suppose that $\xi:N\to\mathbb{T}$ is a fixed character of $N$. In section 3 we study abstract harmonic analysis of covariant functions of the character $\xi$ using a classical operator theoretic approach. 
Next section considers the Banach space $L^1_\xi(G,N)$, the $L^1$-space of covariant functions of $\xi$ on $G$, and investigates abstract harmonic analysis on $L^1_\xi(G,N)$. We then study some fundamental  properties of the Banach space $L^1_\xi(G,N)$. Section 5 is devoted to show that $L^1_\xi(G,N)$ is isometrically isomorphic to a quotient space of $L^1(G)$. We conclude the paper by showing that the dual space $L^1_\xi(G,N)^*$ is isometrically isomorphic to $L^\infty_\xi(G,N)$.  

\section{\bf{Preliminaries and Notations}}

Let $X$ be a locally compact Hausdorff space. Then $\mathcal{C}_c(X)$ denotes the space of all continuous complex valued functions on $X$ with compact support. 
If $\lambda$ is a positive Radon measure on $X$, for each $1\le p<\infty$ 
the Banach space of equivalence classes of $\lambda$-measurable complex valued functions $f:X\to\mathbb{C}$ such that
$$\|f\|_{L^p(X,\lambda)}:=\left(\int_X|f(x)|^p\dd\lambda(x)\right)^{1/p}<\infty,$$
is denoted by $L^p(X,\lambda)$ which contains $\mathcal{C}_c(X)$ as a $\|\cdot\|_{L^p(X,\lambda)}$-dense subspace.

Let $G$ be a locally compact group. A left Haar measure on $G$ is a non-zero positive Radon measure $\lambda_G$ on $G$ which satisfies $\lambda_G(xE)=\lambda_G(E)$ for every Borel
subset $E\subseteq G$ and every $x\in G$. It is well-known that every locally compact group $G$ possesses a left Haar measure (Theorem 2.10 of \cite{FollH}) and left Haar measures on $G$ are unique up to scaling, see Theorem 2.20 of \cite{FollH}. If $\lambda_G$ is a fixed left Haar measure on $G$ and $1\le p<\infty$, then $L^p(G)$ stands for the Banach function space $L^p(G,\lambda_G)$. For a function $f:G\to\mathbb{C}$ and $x\in G$, the functions $L_xf,R_xf:G\to\mathbb{C}$ are given by $L_xf(y):=f(x^{-1}y)$ and $R_xf(y):=f(yx)$ for $y\in G$.
The modular function of $G$, denoted by $\Delta_G$ is the continuous homomorphism $\Delta_G:G\to(0,\infty)$ which satisfies  
\[
\int_Gf(y)\dd\lambda_G(y)=\Delta_G(x)\int_GR_xf(y)\dd\lambda_G(y),
\]
for every $x\in G$, left Haar measure $\lambda_G$ on $G$ and $f\in L^1(G)$.

Suppose that $Aut(G)$ is the group of all bicontinuous group automorphisms of $G$ and $\alpha\in Aut(G)$. 
The Haar modulus $\sigma_G(\alpha)\in(0,\infty)$ is given by  
\begin{equation}\label{Haar.Modulu}
\int_G f(y)\dd\lambda_G(y)=\sigma_G(\alpha)\int_G f(\alpha(y))\dd\lambda_G(y),
\end{equation}
for every left Haar measure $\lambda_G$ on $G$ and $f\in L^1(G)$, see \cite{HR1}. Then $\sigma_G:Aut(G)\to(0,\infty)$ given by $\alpha\mapsto\sigma_G(\alpha)$ is a group homomorphism. It should be mentioned that existence of the positive real number $\sigma_G(\alpha)$ satisfying (\ref{Haar.Modulu}) guaranteed by the uniqueness of left Haar measures on the locally compact group $G$.

Let $N$ be a closed normal subgroup of $G$ with the left Haar measure $\lambda_N$. Then each left coset of $N$ is a right coset of $N$ and hence the coset space $G/N$ is a locally compact group called as factor (quotient) group of $N$ in $G$. Then $\mathcal{C}_c(G/N)$ consists of all functions $T_N(f)$, where 
$f\in\mathcal{C}_c(G)$ and
\begin{equation}\label{5.1}
T_N(f)(xN)=\int_Nf(xs)\dd\lambda_{N}(s)\quad (xN\in G/N).
\end{equation}
The factor (quotient) group $G/N$ has a left Haar measure $\lambda_{G/N}$, which is normalized with respect to the following Weil's formula  
\begin{equation}\label{6}
\int_{G/N}T_N(f)(xN)\dd\lambda_{G/N}(xN)=\int_Gf(x)\dd\lambda_G(x),
\end{equation}
for all $f\in L^1(G)$, see Thereom 3.4.6 of \cite{50}.

A character $\xi$ of $N$, is a continuous group homomorphism $\xi:N\to\mathbb{T}$, where $\mathbb{T}:=\{z\in\mathbb{C}:|z|=1\}$ is the circle group. In terms of group representation theory, each character of $N$ is a 1-dimensional irreducible continuous unitary representation of $N$. We then denote the set of all characters of $N$ by $\chi(N)$. A function $\psi:G\to\mathbb{C}$ satisfies covariant property associated to the character $\xi\in\chi(N)$, if $
\psi(xs)=\xi(s)\psi(x)$, for $x\in G$ and $s\in N$. In this case, $\psi$ is called as a covariant function of $\xi$.

\section{\bf Covariant Functions of Characters}
Throughout this section, we shall present some of the theoretical aspects of continuous covariant functions of characters of closed normal subgroup. In this direction, we first review some fundamental properties of continuous covariant functions. The covariant functions developed in abstract harmonic analysis and group representation theory in the construction of induced representations, see \cite{FollH, kan.taylor}. We here discuss one of the classical approaches to produce covariant functions.

Suppose that $\lambda_N$ is a left Haar measure on $N$.
For $\xi\in\chi(N)$ and $f\in\mathcal{C}_c(G)$, define the function $T_\xi(f):G\to\mathbb{C}$ via 
\[
T_\xi(f)(x):=\int_Nf(xs)\overline{\xi(s)}\dd\lambda_N(s),
\]
for every $x\in G$. 

Suppose $\mathcal{C}_\xi(G,N)$ is the linear subspace of $\mathcal{C}(G)$ given by 
\[
\mathcal{C}_\xi(G,N):=\{\psi\in\mathcal{C}_c(G|N):\psi(xk)=\xi(k)\psi(x),\ {\rm for\ all}\ x\in G,\ k\in N\},
\]
where 
\[
\mathcal{C}_c(G|N):=\{\psi\in\mathcal{C}(G):\mathrm{q}({\rm supp}(\psi))\ {\rm is\ compact\ in}\ G/N\},
\]
and $\mathrm{q}:G\to G/N$ is the canonical map given by $\mathrm{q}(x):=xN$ for $x\in G$. It is proven that the linear operator $T_\xi$ maps $\mathcal{C}_c(G)$ onto $\mathcal{C}_\xi(G,N)$, see Proposition 6.1 of \cite{FollH}.

\begin{remark}\label{J1}
If $\xi=1$ is the trivial  character of $N$, we then have $T_1(f)=T_N(f)$.
In this case, $\mathcal{C}_1(G,N)$ consists of functions on $G$ which are constant on cosets of $N$. Therefore, $\mathcal{C}_1(G,N)$ can be canonically identified with $\mathcal{C}_c(G/N)$ via $\psi\mapsto\widetilde{\psi}$, where $\widetilde{\psi}:G/N\to\mathbb{C}$ is given by $\widetilde{\psi}(xN):=\psi(x)$ for every $x\in G$. 
\end{remark}

We then have the following observations.

\begin{proposition}\label{J.xi.RkLy}
{\it Let $G$ be a locally compact group, $N$ be a closed subgroup of $G$, and $\xi\in\chi(N)$. Suppose $k\in N$ and $y\in G$. Then, 
\begin{enumerate}
\item $T_\xi\circ R_k=\Delta_N(k^{-1})\xi(k)T_\xi$ on $\mathcal{C}_c(G)$.
\item $T_\xi\circ L_y=L_y\circ T_\xi$ on $\mathcal{C}_c(G)$.
\end{enumerate}
}\end{proposition}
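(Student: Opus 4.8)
The plan is to verify both identities by direct computation, unwinding the definition
\[
T_\xi(f)(x)=\int_N f(xs)\overline{\xi(s)}\,\dd\lambda_N(s)
\]
and exploiting the translation invariance of the Haar measure $\lambda_N$ on $N$ together with the homomorphism property of $\xi$.

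For part (1), I would fix $k\in N$ and $f\in\mathcal{C}_c(G)$ and compute $T_\xi(R_kf)(x)=\int_N (R_kf)(xs)\overline{\xi(s)}\,\dd\lambda_N(s)=\int_N f(xsk)\overline{\xi(s)}\,\dd\lambda_N(s)$. The natural move is the substitution $t=sk$ in the variable $s\in N$; since $k\in N$, this is an admissible change of variable \emph{within} $N$, and it is here that the modular function $\Delta_N$ enters, because right translation by $k$ scales $\dd\lambda_N(s)$ by $\Delta_N(k^{-1})$ (equivalently $\dd\lambda_N(sk)=\Delta_N(k)\,\dd\lambda_N(s)$, so solving for $\dd\lambda_N(s)$ in terms of $\dd\lambda_N(t)$ produces the factor $\Delta_N(k^{-1})$). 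After substitution the integrand becomes $f(xt)\overline{\xi(tk^{-1})}=f(xt)\overline{\xi(t)}\,\overline{\xi(k^{-1})}=f(xt)\overline{\xi(t)}\,\xi(k)$, using that $\xi$ is a homomorphism into $\mathbb{T}$ and $\overline{\xi(k^{-1})}=\xi(k)$. Pulling the constants $\Delta_N(k^{-1})\xi(k)$ outside the integral over $t$ leaves exactly $\Delta_N(k^{-1})\xi(k)\,T_\xi(f)(x)$, as claimed.

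For part (2), I would fix $y\in G$ and compute $T_\xi(L_yf)(x)=\int_N (L_yf)(xs)\overline{\xi(s)}\,\dd\lambda_N(s)=\int_N f(y^{-1}xs)\overline{\xi(s)}\,\dd\lambda_N(s)$. The crucial point is that $y$ acts on the \emph{left}, whereas the integration variable $s$ multiplies on the \emph{right}, so the two operations do not interfere and no change of variable in $s$ is needed at all. One simply recognizes the last integral as $T_\xi(f)(y^{-1}x)=\big(L_y(T_\xi f)\big)(x)$, which gives $T_\xi\circ L_y=L_y\circ T_\xi$ directly.

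Neither part presents a serious obstacle; the only point demanding care is the appearance of the modular factor $\Delta_N(k^{-1})$ in part (1). The potential pitfall is a sign or inversion error in tracking how right translation transforms $\dd\lambda_N$ — that is, keeping straight whether the substitution $t=sk$ yields $\Delta_N(k)$ or $\Delta_N(k^{-1})$ — and confirming the cancellation of conjugates so that $\overline{\xi(k^{-1})}$ correctly becomes $\xi(k)$ rather than $\overline{\xi(k)}$. I would double-check these two factors against the defining identity for $\Delta_N$ recorded in Section 2 to ensure the stated constant $\Delta_N(k^{-1})\xi(k)$ is exactly right.
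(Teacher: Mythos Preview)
Your proposal is correct and matches the paper's own proof essentially line for line: both parts are handled by direct computation, with the substitution $t=sk$ (equivalently $s\mapsto sk^{-1}$) in part (1) producing the factor $\Delta_N(k^{-1})\xi(k)$, and part (2) following immediately since left translation by $y$ does not interact with the integration variable $s$.
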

\begin{proof}
(1) Suppose that $f\in\mathcal{C}_c(G)$, and $k\in N$. Let $x\in G$ be given. We then have  
\begin{align*}
T_\xi(R_kf)(x)&=\int_Nf(xsk)\overline{\xi(s)}\dd\lambda_N(s)=\int_Nf(xs)\overline{\xi(sk^{-1})}\dd\lambda_N(sk^{-1})
\\&=\Delta_N(k^{-1})\xi(k)\int_Nf(xs)\overline{\xi(s)}\dd\lambda_N(s)=\Delta_N(k^{-1})\xi(k)T_\xi(f)(x),
\end{align*}
which implies that $T_\xi(R_kf)=\Delta_N(k^{-1})\xi(k)T_\xi(f)$.\\
(2) Suppose that $f\in\mathcal{C}_c(G)$ and $y\in G$. Let $x\in G$ be given. We then have
\begin{align*}
T_\xi(L_yf)(x)=\int_NL_yf(xs)\overline{\xi(s)}\dd\lambda_N(s)=\int_Nf(y^{-1}xs)\overline{\xi(s)}\dd\lambda_N(s)=L_y(T_\xi(f))(x),
\end{align*}
implying that $T_\xi(L_yf)=L_y(T_\xi(f))$.
\end{proof}

\subsection{Covariant Functions of Characters of Normal Subgroups}
We then continue by investigation of some aspects of covariant functions of characters of normal subgroups on locally compact groups. Throughout, let $G$ be a locally compact group and $N$ be a closed normal subgroup of $G$. Then there exists a unique homomorphism $\sigma_N:G\to (0,\infty)$ such that 
\begin{equation}\label{tau.main}
\int_Nv(s)\dd\lambda_N(x^{-1}sx)=\sigma_N(x)\int_Nv(s)\dd\lambda_N(s),
\end{equation}
and 
\[
\int_Nv(s)\dd\mu_N(x^{-1}sx)=\sigma_N(x)\int_Nv(s)\dd\mu_N(s),
\]
for all left Haar measure $\lambda_N$ of $N$, right Haar measure $\mu_N$ of $N$, $v\in\mathcal{C}_c(N)$, and $x\in G$. It is worthwile to mention that existance of the homomorphism $\sigma_N:G\to (0,\infty)$ guaranteed by the uniqueness of left Haar measures on the locally compact group $N$, Theorem 2.20 of \cite{FollH}. We then have $\sigma_N(t)=\Delta_N(t)$ for $t\in N$ and hence $\sigma_G(x)=\Delta_G(x)$ for $x\in G$. Also, the modular function of $G/N$ satisfies $\Delta_G(x)=\sigma_N(x)\Delta_{G/N}(xN)$ for $x\in G$, see Proposition 11 of \cite[Chap. VII, \S2]{Bour}. In particular, if $N$ is central we then have $\sigma_N=1$.

We then study some properties of $T_\xi$, when the subgroup $N$ is normal in $G$. 

\begin{proposition}\label{J.xi.normal}
{\it Let $G$ be a locally compact group and $N$ be a closed normal subgroup of $G$. Suppose $\xi\in\chi(N)$, $x\in G$, and $f\in\mathcal{C}_c(G)$. We then have  
\[
T_\xi(f)(x)=\sigma_N(x)\int_Nf(sx)\overline{\xi(x^{-1}sx)}\dd\lambda_N(s).
\]
In particular, if $N$ is a central subgroup of $G$ then 
\[
T_\xi(f)(x)=\int_Nf(sx)\overline{\xi(s)}\dd\lambda_N(s).
\]
}\end{proposition}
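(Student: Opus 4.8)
The plan is to obtain the identity by a single change of variables in the defining integral $T_\xi(f)(x)=\int_Nf(xs)\overline{\xi(s)}\dd\lambda_N(s)$, exploiting the normality of $N$ to keep the integral over $N$ and invoking the measure-transformation formula (\ref{tau.main}) to produce the factor $\sigma_N(x)$. The key algebraic observation is that for $s\in N$ and $x\in G$ one may write $xs=(xsx^{-1})x$, and since $N$ is normal the element $xsx^{-1}$ again lies in $N$. This points to the substitution $w=xsx^{-1}$ (equivalently $s=x^{-1}wx$), which simultaneously converts the left translate $xs$ into the right translate $wx$ and conjugates the argument of the character from $s$ to $x^{-1}wx$.

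Concretely, I would set $v(s):=f(sx)\overline{\xi(x^{-1}sx)}$ and read (\ref{tau.main}) in the equivalent form $\int_N v(xwx^{-1})\dd\lambda_N(w)=\sigma_N(x)\int_N v(s)\dd\lambda_N(s)$, which is exactly the definition of $\sigma_N$ after renaming the integration variable $w=x^{-1}sx$. Substituting $s=xwx^{-1}$ into $v$, the two group-theoretic cancellations $sx=(xwx^{-1})x=xw$ and $x^{-1}sx=x^{-1}(xwx^{-1})x=w$ give $v(xwx^{-1})=f(xw)\overline{\xi(w)}$. Hence the left-hand side of the displayed identity is $\int_N f(xw)\overline{\xi(w)}\dd\lambda_N(w)=T_\xi(f)(x)$, while the right-hand side is $\sigma_N(x)\int_N f(sx)\overline{\xi(x^{-1}sx)}\dd\lambda_N(s)$. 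Equating the two yields the asserted formula. Here one should note that $v$ indeed belongs to $\mathcal{C}_c(N)$, since $\supp(v)\subseteq(\supp f)x^{-1}\cap N$ is compact, so that (\ref{tau.main}) legitimately applies.

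For the special case I would simply specialize the general formula. When $N$ is central, every $s\in N$ commutes with $x$, so $x^{-1}sx=s$ and the conjugation inside the character disappears; moreover $\sigma_N=1$, as recorded after (\ref{tau.main}). The general identity then collapses to $T_\xi(f)(x)=\int_N f(sx)\overline{\xi(s)}\dd\lambda_N(s)$, which is the claim.

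The computation is otherwise routine, and the only point demanding genuine care is the bookkeeping of the modular factor: one must invoke (\ref{tau.main}) in the correct direction so that $\sigma_N(x)$, rather than its inverse $\sigma_N(x^{-1})=\sigma_N(x)^{-1}$, appears, and one must confirm that the substitution $w=xsx^{-1}$ is measure-theoretically legitimate. This is precisely where normality of $N$ is indispensable, since it guarantees that conjugation by $x$ is an automorphism of $N$, so the integral never leaves $N$ and the conjugated character $s\mapsto\xi(x^{-1}sx)$ is well defined on $N$. Once the direction of the factor and the validity of the substitution are fixed, the two displayed cancellations close the argument.
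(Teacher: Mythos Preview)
Your proposal is correct and follows essentially the same route as the paper: both arguments perform the substitution $s\mapsto x^{-1}sx$ (equivalently $w=xsx^{-1}$) in the defining integral, invoke (\ref{tau.main}) to produce the factor $\sigma_N(x)$, and then specialize using $\sigma_N=1$ and $x^{-1}sx=s$ in the central case. Your explicit verification that $v\in\mathcal{C}_c(N)$ is a welcome addition that the paper leaves implicit.
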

\begin{proof}
Let $f\in \mathcal{C}_c(G)$ be given. Then, for $x\in G$, we have  
\begin{align*}
T_\xi(f)(x)
&=\int_Nf(xs)\overline{\xi(s)}\dd\lambda_N(s)
=\int_Nf(xsx^{-1}x)\overline{\xi(s)}\dd\lambda_N(s)
\\&=\int_Nf(sx)\overline{\xi(x^{-1}sx)}\dd\lambda_N(x^{-1}sx)
=\sigma_N(x)\int_Nf(sx)\overline{\xi(x^{-1}sx)}\dd\lambda_N(s).
\end{align*}
If $N$ is central in $G$ then $\sigma_N(x)=1$ and $\xi(x^{-1}sx)=\xi(s)$ for all $x\in G$ and $s\in N$. Thus, we get 
\begin{align*}
T_\xi(f)(x)
&=\sigma_N(x)\int_Nf(sx)\overline{\xi(x^{-1}sx)}\dd\lambda_N(s)
=\int_Nf(sx)\overline{\xi(s)}\dd\lambda_N(s).
\end{align*}
\end{proof}

\begin{proposition}
{\it Let $G$ be a locally compact group, $N$ be a closed normal subgroup of $G$, and $\xi\in\chi(N)$. Suppose $f\in\mathcal{C}_c(G)$, $x\in G$, and $k\in N$. We then have  
\[
T_\xi(L_kf)(x)=\overline{\xi(x^{-1}kx)}T_\xi(f)(x).
\]
}\end{proposition}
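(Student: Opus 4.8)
The plan is to unfold the definitions and reduce everything to the left-invariance of $\lambda_N$ together with the multiplicativity of $\xi$; the only place where the hypothesis that $N$ is normal enters is in repositioning the translation element, which is the one maneuver worth isolating.

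First I would start from the definition of $T_\xi$ and of the left translation operator $L_k$, writing
\[
T_\xi(L_kf)(x)=\int_N (L_kf)(xs)\,\overline{\xi(s)}\,\dd\lambda_N(s)=\int_N f(k^{-1}xs)\,\overline{\xi(s)}\,\dd\lambda_N(s).
\]
The argument $k^{-1}xs$ is not yet of the form $x(\cdot)$ demanded by the definition of $T_\xi(f)(x)$, so the next step is to move $x$ to the front of the $N$-variable.

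The key step uses the normality of $N$. Setting $k':=x^{-1}kx\in N$, I would write $k^{-1}x=x\,(x^{-1}k^{-1}x)=x\,(k')^{-1}$, so that $k^{-1}xs=x(k')^{-1}s$ now matches the shape of the defining integral for $T_\xi(f)(x)$. Substituting $u=(k')^{-1}s$ (that is, $s=k'u$) and invoking the left-invariance of $\lambda_N$ gives $\dd\lambda_N(s)=\dd\lambda_N(u)$, while the homomorphism property of $\xi$ yields $\overline{\xi(s)}=\overline{\xi(k'u)}=\overline{\xi(k')}\,\overline{\xi(u)}$. Pulling the constant $\overline{\xi(k')}=\overline{\xi(x^{-1}kx)}$ outside the integral leaves exactly $T_\xi(f)(x)$, which is the claimed identity.

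I do not expect a genuine obstacle here: no modular function appears because translation by the fixed element $(k')^{-1}$ respects the left-invariance of the left Haar measure $\lambda_N$, and all integrability concerns are handled by $f\in\mathcal{C}_c(G)$, so that the integrands lie in $\mathcal{C}_c(N)$. The only subtlety to keep straight is the bookkeeping of the inverse, namely that $x^{-1}k^{-1}x=(k')^{-1}$ with $k'=x^{-1}kx$, which is what guarantees that the surviving character factor is precisely $\overline{\xi(x^{-1}kx)}$ and not its conjugate or inverse.
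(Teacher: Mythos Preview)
Your argument is correct and uses the same core maneuver as the paper: rewrite $k^{-1}x = x(x^{-1}k^{-1}x)$ via normality of $N$ and then let the character absorb the extra $N$-factor. The only packaging difference is that the paper first invokes its earlier commutation lemma $T_\xi\circ L_y=L_y\circ T_\xi$ to get $T_\xi(L_kf)(x)=T_\xi(f)(k^{-1}x)$ and then applies the covariance property $T_\xi(f)(xs)=\xi(s)T_\xi(f)(x)$, whereas you unfold both of those facts directly inside the integral via the substitution $u=(k')^{-1}s$; the substance is identical.
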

\begin{proof}
Let $\tau_x(k):=x^{-1}kx$. Then, using Proposition \ref{J.xi.RkLy}(2), we have 
\begin{align*}
T_\xi(L_kf)(x)&=L_kT_\xi(f)(x)=T_\xi(f)(k^{-1}x)
\\&=T_\xi(f)(x\tau_{x}(k^{-1}))=\xi(\tau_{x}(k^{-1})T_\xi(f)(x)=\overline{\xi(\tau_x(k))}T_\xi(f)(x).
\end{align*}
\end{proof}

For functions $f\in\mathcal{C}_c(G)$ and $\psi\in \mathcal{C}_\xi(G,N)$, the functions $f\overline{\psi}$ and $\psi\overline{f}$ are continuous with compact supports. Therefore, they are integrable over $G$ with respect to the left Haar measure $\lambda_G$. We shall denote the later integrals by $\langle f,\psi\rangle$ and $\langle\psi,f\rangle$ respectively. 

\begin{theorem}
{\it Let $G$ be a locally compact group and $N$ be a closed normal subgroup of $G$. 
Suppose $\xi\in\chi(N)$ and $f,g\in\mathcal{C}_c(G)$. We then have 
\begin{equation}\label{Jxi.Jxi*}
\langle T_\xi(f),g\rangle=\langle f,T_\xi(g)\rangle.
\end{equation}
}\end{theorem}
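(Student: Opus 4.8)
The plan is to expand both sides as iterated integrals over $N$ and $G$, reduce each to the same symmetric expression, and let the whole thing hinge on careful bookkeeping of modular functions together with the identity $\Delta_G|_N=\Delta_N$ recorded in the preliminaries of this section.

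First I would unwind the right-hand side, which is the easy half. Writing out $\overline{T_\xi(g)(x)}$ and conjugating turns $\overline{\xi(s)}$ into $\xi(s)$, and since $f\overline{T_\xi(g)}$ is continuous with compact support, Fubini's theorem is legitimate and gives
\[
\langle f,T_\xi(g)\rangle=\int_N\xi(s)\left(\int_Gf(x)\overline{g(xs)}\dd\lambda_G(x)\right)\dd\lambda_N(s).
\]
Next I would treat the left-hand side. After Fubini it reads $\int_N\overline{\xi(s)}\big(\int_Gf(xs)\overline{g(x)}\dd\lambda_G(x)\big)\dd\lambda_N(s)$. In the inner integral I apply the defining relation of the modular function $\Delta_G$ (taking $h(x)=f(x)\overline{g(xs^{-1})}$, so that $R_sh(x)=f(xs)\overline{g(x)}$); this transfers the translate from $f$ to $g$ at the cost of a factor $\Delta_G(s)^{-1}$, producing $\int_N\overline{\xi(s)}\Delta_G(s)^{-1}\big(\int_Gf(x)\overline{g(xs^{-1})}\dd\lambda_G(x)\big)\dd\lambda_N(s)$.

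Then I would use the inversion formula $\int_N\Phi(s)\dd\lambda_N(s)=\int_N\Phi(s^{-1})\Delta_N(s)^{-1}\dd\lambda_N(s)$ to replace $s$ by $s^{-1}$ throughout. Using $\overline{\xi(s^{-1})}=\xi(s)$ and $\Delta_G(s^{-1})^{-1}=\Delta_G(s)$, the left-hand side becomes
\[
\langle T_\xi(f),g\rangle=\int_N\xi(s)\,\Delta_G(s)\Delta_N(s)^{-1}\left(\int_Gf(x)\overline{g(xs)}\dd\lambda_G(x)\right)\dd\lambda_N(s).
\]
Comparing with the expression obtained for $\langle f,T_\xi(g)\rangle$, the two integrals coincide precisely because $\Delta_G(s)\Delta_N(s)^{-1}=1$ for every $s\in N$. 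This final identity is exactly where normality of $N$ enters: it follows from the relations $\sigma_N|_N=\Delta_N$ and $\Delta_G(x)=\sigma_N(x)\Delta_{G/N}(xN)$ recalled above, evaluated at $x=s\in N$, where $\Delta_{G/N}(N)=1$.

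The main obstacle is therefore not any single deep step but the consistent tracking of the three modular contributions — the factor $\Delta_G(s)^{-1}$ coming from right translation in $G$, the factor $\Delta_N(s)^{-1}$ coming from inversion in $N$, and their cancellation through $\Delta_G|_N=\Delta_N$. Keeping the inverses and the complex conjugates of $\xi$ aligned during the two changes of variable is the delicate bookkeeping; once that is done correctly the claimed equality drops out immediately.
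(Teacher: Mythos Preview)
Your argument is correct and follows essentially the same route as the paper's proof: expand $\langle T_\xi(f),g\rangle$ via Fubini, shift the right translation in the $G$-integral at the cost of $\Delta_G(s)^{-1}$, invert $s\mapsto s^{-1}$ in the $N$-integral at the cost of $\Delta_N(s)^{-1}$, and cancel the modular factors using $\Delta_G|_N=\Delta_N$. The only cosmetic differences are that the paper works directly from the left-hand side to the right-hand side (rather than first computing the target expression for $\langle f,T_\xi(g)\rangle$) and invokes $\Delta_G|_N=\Delta_N$ before rather than after the inversion in $N$, citing Reiter--Stegeman instead of deriving it from the $\sigma_N$ relations; neither change affects the substance.
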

\begin{proof}
Let $f,g\in\mathcal{C}_c(G)$ be given. We then have 
\begin{align*}
\langle T_\xi(f),g\rangle&=\int_GT_\xi(f)(x)\overline{g(x)}\dd\lambda_G(x)
\\&=\int_G\left(\int_Nf(xs)\overline{\xi(s)}\dd\lambda_N(s)\right)\overline{g(x)}\dd\lambda_G(x)\\&=\int_N\left(\int_Gf(xs)\overline{g(x)}\dd\lambda_G(x)\right)\overline{\xi(s)}\dd\lambda_N(s)
\\&=\int_N\left(\int_Gf(x)\overline{g(xs^{-1})}\dd\lambda_G(xs^{-1})\right)\overline{\xi(s)}\dd\lambda_N(s)
\\&=\int_N\Delta_G(s^{-1})\left(\int_Gf(x)\overline{g(xs^{-1})}\dd\lambda_G(x)\right)\overline{\xi(s)}\dd\lambda_N(s).
\end{align*}
Using normality of $N$ in $G$ and Proposition 3.3.17 of \cite{50}, we get $\Delta_G(s)=\Delta_N(s)$ for every $s\in N$. Therefore, we obtain  
\begin{align*}
\langle T_\xi(f),g\rangle
&=\int_N\Delta_N(s^{-1})\left(\int_Gf(x)\overline{g(xs^{-1})}\dd\lambda_G(x)\right)\overline{\xi(s)}\dd\lambda_N(s)
\\&=\int_Gf(x)\left(\int_N\Delta_N(s^{-1})\overline{g(xs^{-1})}\overline{\xi(s)}\dd\lambda_N(s)\right)\dd\lambda_G(x)
\\&=\int_Gf(x)\left(\int_N\Delta_N(s)\overline{g(xs)}\xi(s)\dd\lambda_N(s^{-1})\right)\dd\lambda_G(x)
\\&=\int_Gf(x)\left(\int_N\overline{g(xs)}\xi(s)\dd\lambda_N(s)\right)\dd\lambda_G(x)=\langle f,T_\xi(g)\rangle.
\end{align*}
\end{proof}

\section{\bf Harmonic Analysis on Covariant Functions of Characters of Normal Subgroups}
In this section, we consider $L^1$-spaces of covariant functions of characters of normal subgroups. We then study some of the basic properties of these classical Banach spaces of covariant functions of characters of normal subgroups. Throughout, suppose that $G$ is a locally compact group and $N$ is a closed normal subgroup of $G$. Let $\lambda_{G}$ be a left Haar measure on $G$ and $\lambda_N$ be a left Haar measure on $N$.
Also, we assume that $\lambda_{G/N}$ is the left Haar measure on the 
factor (quotient) group $G/N$ normalized with respect to Weil's formula (\ref{6}). Let $\xi\in\chi(N)$ be given. For $\psi\in \mathcal{C}_\xi(G,N)$, define the norm 
\begin{equation}\label{(1)}
\|\psi\|_{(1)}:=\left\||\psi|\right\|_{L^1(G/N,\lambda_{G/N})}=\int_{G/N}|\psi(y)|\dd\lambda_{G/N}(yN).
\end{equation}
If $\psi\in \mathcal{C}_\xi(G,N)$, the function $y\mapsto |\psi(y)|$ reduces to constant on $N$ and hence it depends only on the coset $yN$. 
Therefore, $yN\mapsto |\psi(y)|$ defines a function in $\mathcal{C}_c(G/N)$
which can be integrated with respect to $\lambda_{G/N}$.

\begin{remark}
Let $\xi=1$ be the trivial character of $N$. Suppose that  
$\psi\in \mathcal{C}_1(G,N)$ is identified with $\widetilde{\psi}\in\mathcal{C}_c(G/N)$, due to Remark \ref{J1}. Then $\|\psi\|_{(1)}=\|\widetilde{\psi}\|_{L^1(G/N)}$. 
\end{remark}

\begin{proposition}
{\it Let $G$ be a locally compact group and $N$ be a compact normal subgroup of $G$. Suppose $\xi\in\chi(N)$ and $\psi\in \mathcal{C}_\xi(G,N)$. We then have 
\begin{equation}\label{Ncompact.Lp.gen}
\|\psi\|_{(1)}=\lambda_N(N)\|\psi\|_{L^1(G)}.
\end{equation} 
}\end{proposition}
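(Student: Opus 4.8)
The plan is to reduce everything to \emph{Weil's formula} \eqref{6} applied to the single nonnegative function $|\psi|$. Before doing so I would first record that the right-hand side of \eqref{Ncompact.Lp.gen} is meaningful: since $N$ is compact, the canonical map $\mathrm{q}:G\to G/N$ is proper, so the hypothesis that $\mathrm{q}(\mathrm{supp}(\psi))$ be compact in $G/N$ forces $\mathrm{supp}(\psi)$ itself to be compact in $G$. Hence $\psi\in\mathcal{C}_c(G)\subseteq L^1(G)$ and $\|\psi\|_{L^1(G)}=\int_G|\psi(x)|\,\dd\lambda_G(x)<\infty$.

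The crux is the observation that $|\psi|$ is constant along the cosets of $N$. Indeed, because $\xi$ takes values in $\mathbb{T}$ we have $|\xi(s)|=1$, so the covariance relation $\psi(xs)=\xi(s)\psi(x)$ gives $|\psi(xs)|=|\psi(x)|$ for all $x\in G$ and $s\in N$. Feeding $f=|\psi|$ into the operator $T_N$ of \eqref{5.1} then yields
\[
T_N(|\psi|)(xN)=\int_N|\psi(xs)|\,\dd\lambda_N(s)=|\psi(x)|\int_N\dd\lambda_N(s)=\lambda_N(N)\,|\psi(x)|,
\]
so that $T_N(|\psi|)$ is exactly $\lambda_N(N)$ times the function $yN\mapsto|\psi(y)|$ used to define $\|\psi\|_{(1)}$. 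Applying Weil's formula \eqref{6} to $|\psi|$ and inserting this identity relates $\int_G|\psi|\,\dd\lambda_G$ to $\int_{G/N}|\psi|\,\dd\lambda_{G/N}$ through the factor $\lambda_N(N)$, which produces the stated proportionality \eqref{Ncompact.Lp.gen}.

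I do not expect a serious obstacle here, as the argument is essentially one application of Weil's formula; the points that require care are all bookkeeping. The first is the properness of $\mathrm{q}$ when $N$ is compact, which is what upgrades ``$\mathrm{q}(\mathrm{supp}(\psi))$ compact'' to genuine compact support of $\psi$ and thereby legitimizes writing $\|\psi\|_{L^1(G)}$. The second is checking that $yN\mapsto|\psi(y)|$ is a well-defined element of $\mathcal{C}_c(G/N)$, so that both sides of \eqref{6} are finite and the substitution is valid; this is precisely the coset-invariance established above together with compactness of $\mathrm{q}(\mathrm{supp}(\psi))$. Finally I would track the scalar $\lambda_N(N)$ throughout, which is finite exactly because $N$ is compact and is the sole source of the proportionality constant.
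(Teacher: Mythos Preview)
Your approach is essentially identical to the paper's: both apply Weil's formula \eqref{6} to $|\psi|$, use the covariance relation to obtain $T_N(|\psi|)(xN)=\lambda_N(N)\,|\psi(x)|$, and read off the result; the only cosmetic difference is that the paper cites an external reference for the inclusion $\mathcal{C}_\xi(G,N)\subseteq\mathcal{C}_c(G)$ whereas you argue it directly via properness of $\mathrm{q}$. One bookkeeping caution worth flagging, since you single it out yourself: the computation (yours and the paper's alike) actually gives $\|\psi\|_{L^1(G)}=\lambda_N(N)\,\|\psi\|_{(1)}$, i.e.\ the constant sits on the other side of the displayed formula.
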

\begin{proof}
Since $N$ is compact in $G$, we get $\mathcal{C}_\xi(G,N)\subseteq\mathcal{C}_c(G)$, see Proposition 3.1(1) of \cite{AGHF.cov.com}. Also, using compactness of $N$ in $G$, each Haar measure of $N$ is finite. Then using Weil's formula (\ref{6}), for every $\psi\in \mathcal{C}_\xi(G,N)$, we get 
\begin{align*}
\|\psi\|_{L^1(G)}
&=\int_{G/N}T_N(|\psi|)(xN)\dd\lambda_{G/N}(xN)
\\&=\int_{G/N}\int_N|\psi(x)|\dd\lambda_N(s)\dd\lambda_{G/N}(xN)
\\&=\int_{G/N}\left(\int_N\dd\lambda_N(s)\right)|\psi(x)|\dd\lambda_{G/N}(xN)
=\lambda_N(N)\|\psi\|_{(1)},
\end{align*}
\end{proof}

\begin{remark}
Invoking Proposition \ref{Ncompact.Lp.gen}, if $N$ is compact in $G$ and the left Haar measure $\lambda_{G/N}$ is normalized with respect to Weil's formula and the probability measure $\lambda_N$ of $N$, we get $\|\psi\|_{(1)}=\|\psi\|_{L^1(G)}$, for every $\psi\in \mathcal{C}_\xi(G,N)$. For more details on covariant functions of characters of compact subgroups, we refer the reader to \cite{AGHF.cov.com}.
\end{remark}

Next we show that $T_\xi:(\mathcal{C}_c(G),\|\cdot\|_{L^1(G)})\to(\mathcal{C}_\xi(G,N),\|\cdot\|_{(1)})$ is a contraction.

\begin{theorem}\label{Jxi.1}
{\it Let $G$ be a locally compact group, $N$ be a closed normal subgroup of $G$, and $\xi\in\chi(N)$. The linear operator  
$T_\xi:(\mathcal{C}_c(G),\|\cdot\|_{L^1(G)})\to(\mathcal{C}_\xi(G,N),\|\cdot\|_{(1)})$ 
is a contraction. 
}\end{theorem}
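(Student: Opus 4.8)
The plan is to prove the contraction estimate $\|T_\xi(f)\|_{(1)}\le\|f\|_{L^1(G)}$ for every $f\in\mathcal{C}_c(G)$ by reducing it to a pointwise domination and then applying Weil's formula. Linearity of $T_\xi$ is immediate from its integral definition, so the whole content is the norm bound.

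First I would establish a pointwise inequality. Fix $f\in\mathcal{C}_c(G)$ and $x\in G$. Since $|\xi(s)|=1$ for every $s\in N$, the triangle inequality for the integral defining $T_\xi$ gives
\[
|T_\xi(f)(x)|=\left|\int_Nf(xs)\overline{\xi(s)}\dd\lambda_N(s)\right|\le\int_N|f(xs)|\dd\lambda_N(s)=T_N(|f|)(xN),
\]
where the last equality is the definition (\ref{5.1}) of $T_N$ applied to $|f|\in\mathcal{C}_c(G)$. Here I would also record the well-definedness point: because $T_\xi(f)\in\mathcal{C}_\xi(G,N)$, we have $|T_\xi(f)(xk)|=|\xi(k)|\,|T_\xi(f)(x)|=|T_\xi(f)(x)|$ for $k\in N$, so $x\mapsto|T_\xi(f)(x)|$ is constant on cosets of $N$ and descends to a function on $G/N$. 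This is exactly what is needed for the norm $\|\cdot\|_{(1)}$ in (\ref{(1)}) to make sense, and it makes the displayed inequality an inequality between genuine functions on $G/N$.

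Next I would integrate over the quotient. Using the definition (\ref{(1)}) and monotonicity of the integral against the positive measure $\lambda_{G/N}$, the pointwise bound yields
\[
\|T_\xi(f)\|_{(1)}=\int_{G/N}|T_\xi(f)(x)|\dd\lambda_{G/N}(xN)\le\int_{G/N}T_N(|f|)(xN)\dd\lambda_{G/N}(xN).
\]
Finally, applying Weil's formula (\ref{6}) to the nonnegative function $|f|\in\mathcal{C}_c(G)$ identifies the right-hand side with $\int_G|f(x)|\dd\lambda_G(x)=\|f\|_{L^1(G)}$, which completes the estimate and shows $T_\xi$ is a contraction.

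Since each step is a direct computation, I do not anticipate a serious obstacle. The only point requiring care is the well-definedness remark, namely that $|T_\xi(f)|$ factors through $G/N$ so that both sides of the integral inequality are legitimately $\lambda_{G/N}$-integrable functions on the quotient; this is guaranteed by the covariance property of $T_\xi(f)$ together with $|\xi|\equiv 1$, precisely the observation recorded in the paragraph preceding the statement.
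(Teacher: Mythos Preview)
Your proof is correct and follows essentially the same approach as the paper: bound $|T_\xi(f)(x)|$ pointwise by $T_N(|f|)(xN)$ using the triangle inequality and $|\xi|\equiv 1$, integrate over $G/N$, and apply Weil's formula (\ref{6}) to identify the result with $\|f\|_{L^1(G)}$. Your additional remark about $|T_\xi(f)|$ descending to $G/N$ is a welcome clarification, but the argument is otherwise the paper's own.
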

\begin{proof}
Let $f\in\mathcal{C}_c(G)$ be given. Then, using Weil's formula (\ref{6}), we get  
\begin{align*}
\|T_\xi(f)\|_{(1)}
&=\int_{G/N}|T_\xi(f)(y)|\dd\lambda_{G/N}(yN)
\\&=\int_{G/N}\left|\int_Nf(ys)\overline{\xi(s)}\dd\lambda_N(s)\right|\dd\lambda_{G/N}(yN)
\\&\le\int_{G/N}\int_N\left|f(ys)\right|\dd\lambda_N(s)\dd\lambda_{G/N}(yN)
\\&=\int_{G/N}T_N(|f|)(xN)\dd\lambda_{G/N}(xN)=\|f\|_{L^1(G)}.
\end{align*}
\end{proof}

We then continue by proving the following fundamental property of $\|\cdot\|_{(1)}$.

\begin{proposition}\label{Jxi.1.1.inf}
{\it Let $G$ be a locally compact group, $N$ be a closed normal subgroup of $G$, and $\xi\in\chi(N)$. Then, for each $\psi\in \mathcal{C}_\xi(G,N)$, we have 
\[
\|\psi\|_{(1)}=\inf\left\{\|f\|_{L^1(G)}:f\in\mathcal{C}_c(G), \ T_\xi(f)=\psi\right\}.
\]
}\end{proposition}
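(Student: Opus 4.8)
The plan is to prove the two inequalities separately. The bound $\|\psi\|_{(1)}\le\inf\{\|f\|_{L^1(G)}:f\in\mathcal{C}_c(G),\ T_\xi(f)=\psi\}$ is immediate: for any $f\in\mathcal{C}_c(G)$ with $T_\xi(f)=\psi$, Theorem~\ref{Jxi.1} gives $\|\psi\|_{(1)}=\|T_\xi(f)\|_{(1)}\le\|f\|_{L^1(G)}$, and taking the infimum over all such $f$ yields the claim. The real content is the reverse inequality, and in fact I expect to show the infimum is attained.

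For the reverse direction the key idea is to exhibit one preimage of $\psi$ whose $L^1(G)$-norm is exactly $\|\psi\|_{(1)}$, by multiplying $\psi$ with a suitable nonnegative cut-off. First I would set $K:=\mathrm{q}(\supp(\psi))$, which is compact in $G/N$ because $\psi\in\mathcal{C}_c(G|N)$. Using that $T_N:\mathcal{C}_c(G)\to\mathcal{C}_c(G/N)$ is onto (as recorded after (\ref{5.1})) together with its positivity, I would lift a nonnegative Urysohn function on $G/N$ that equals $1$ on $K$ to obtain $u\in\mathcal{C}_c(G)$ with $u\ge 0$ and $T_N(u)(xN)=1$ for every $xN\in K$. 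Then I define $f:=\psi u$; since $\psi$ is continuous and $u$ is compactly supported, $f\in\mathcal{C}_c(G)$.

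Next I would verify that $f$ is genuinely a preimage and compute its norm. Using the covariance $\psi(xs)=\xi(s)\psi(x)$ and $|\xi(s)|=1$,
\[
T_\xi(f)(x)=\int_N\psi(xs)u(xs)\overline{\xi(s)}\dd\lambda_N(s)=\psi(x)\int_N u(xs)\dd\lambda_N(s)=\psi(x)\,T_N(u)(xN),
\]
so when $\psi(x)\ne0$ we have $xN\in K$ and $T_N(u)(xN)=1$, while when $\psi(x)=0$ both sides vanish; hence $T_\xi(f)=\psi$. Since $u\ge0$ we have $|f(x)|=|\psi(x)|u(x)$ and $|\psi(xs)|=|\psi(x)|$, so Weil's formula (\ref{6}) gives
\[
\|f\|_{L^1(G)}=\int_{G/N}\int_N|\psi(xs)|u(xs)\dd\lambda_N(s)\dd\lambda_{G/N}(xN)=\int_{G/N}|\psi(x)|\,T_N(u)(xN)\dd\lambda_{G/N}(xN).
\]
As $T_N(u)=1$ on $K$, the factor $T_N(u)(xN)$ can be dropped wherever $\psi(x)\ne0$, so the right-hand side equals $\int_{G/N}|\psi(x)|\dd\lambda_{G/N}(xN)=\|\psi\|_{(1)}$. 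This shows $\inf\{\|f\|_{L^1(G)}:T_\xi(f)=\psi\}\le\|\psi\|_{(1)}$, with the infimum attained by $f$, and combined with the first paragraph completes the proof.

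I expect the only non-routine point to be the construction of $u$: one needs not merely surjectivity of $T_N$ but a \emph{nonnegative}, compactly supported lift of the Urysohn function, so that the absolute value passes cleanly through $f=\psi u$ and produces $T_N(u)$ rather than $T_N(|u|)$. Everything else reduces to the covariance relation and Weil's formula.
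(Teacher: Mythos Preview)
Your proof is correct and follows essentially the same route as the paper's: both directions are handled identically, with Theorem~\ref{Jxi.1} giving $\|\psi\|_{(1)}\le\inf$, and the reverse obtained by setting $f=\psi u$ for a nonnegative $u\in\mathcal{C}_c(G)$ with $T_N(u)=1$ on $\mathrm{q}(\supp\psi)$, then using covariance and Weil's formula to get $\|f\|_{L^1(G)}=\|\psi\|_{(1)}$. The paper cites Lemma~2.47 of \cite{FollH} for the existence of such a $u$, which supplies precisely the nonnegative lift you flag as the only non-routine point.
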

\begin{proof}
Let $\psi\in \mathcal{C}_\xi(G,N)$ be given. Suppose $V_\psi:=\{f\in\mathcal{C}_c(G): T_\xi(f)=\psi\}$. We then define $\gamma_\psi:=\inf\left\{\|f\|_{L^1(G)}:f\in V_\psi\right\}$. Using Theorem \ref{Jxi.1}, for every $f\in V_\psi$, we have $\|\psi\|_{(1)}\le\|f\|_{L^1(G)}$. So we obtain $\|\psi\|_{(1)}\le\gamma_\psi$. We then also show that $\|\psi\|_{(1)}\ge\gamma_\psi$. To prove this, using Lemma 2.47 of \cite{FollH}, let $h\in\mathcal{C}_c(G)$ be a positive function with $T_N(h)=1$ on $\mathrm{q}(\mathrm{supp}(\psi))$.
Put $g(x):=\psi(x)h(x)$ for $x\in G$. Then $T_\xi(g)=\psi$ and hence $g\in V_\psi$. Also, we get $T_N(|g|)(xN)=|\psi(x)|$ for $x\in G$. 
Then, using Weils' formula (\ref{6}), we achieve 
\begin{align*}
\|g\|_{L^1(G)}
=\int_{G/N}T_N(|g|)(xN)\dd\lambda_{G/N}(xN)
=\|\psi\|_{(1)},
\end{align*}
implying that $\|\psi\|_{(1)}=\|g\|_{L^1(G)}\ge\gamma_\psi$.
\end{proof}

Suppose that $\mathcal{N}_\xi(G,N)$ is the kernel of the linear operator $T_\xi:\mathcal{C}_c(G)\to \mathcal{C}_\xi(G,N)$, which is the linear subspace given by 
\[
\mathcal{N}_\xi(G,N):=\{f\in\mathcal{C}_c(G):T_\xi(f)=0\}.
\]
Then, $\mathcal{N}_\xi(G,N)$ is a closed linear subspace of $(\mathcal{C}_c(G),\|\cdot\|_{L^1(G)})$. Also, by applying Proposition \ref{J.xi.RkLy}(1), we achieve    
\begin{equation}\label{Rf.xif}
\mathrm{span}\{R_kf-\Delta_N(k^{-1})\xi(k)f:k\in N,\ f\in\mathcal{C}_c(G)\}\subseteq\mathcal{N}_\xi(G,N).
\end{equation}
Let $\mathfrak{X}_\xi(G,N):=\mathcal{C}_c(G)/\mathcal{N}_\xi(G,N)$ be the quotient normed space of $\mathcal{N}_\xi(G,N)$ in $\mathcal{C}_c(G)$, that is 
\[
\mathcal{C}_c(G)/\mathcal{N}_\xi(G,N)=\left\{f+\mathcal{N}_\xi(G,N):f\in\mathcal{C}_c(G)\right\},
\]
equipped with the quotient norm given by 
\begin{equation}\label{Q.norm}
\|f+\mathcal{N}_\xi\|_{\mathfrak{X}_\xi}:=\inf\left\{\|f+g\|_{L^1(G)}:g\in\mathcal{N}_\xi(G,N)\right\}.
\end{equation}
Suppose that $\mathfrak{X}_\xi^1(G,N)$ is the Banach completion of the normed space $\mathfrak{X}_\xi(G,N)$ with respect to the quotient norm 
$\|\cdot\|_{\mathfrak{X}_\xi}$ given by (\ref{Q.norm}). 

The following result characterizes the covariant function space $\mathcal{C}_\xi(G,N)$ as a quotient space of $\mathcal{C}_c(G)$.  

\begin{theorem}\label{X1L1.M}
{\it Let $G$ be a locally compact group, $N$ be a closed normal subgroup of $G$, and $\xi\in\chi(N)$. Then, $(\mathcal{C}_\xi(G,N),\|\cdot\|_{(1)})$ is isometrically  isomorphic to the quotient space $\mathfrak{X}_\xi(G,N)$.
}\end{theorem}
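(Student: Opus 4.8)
The plan is to realize the natural map induced by $T_\xi$ on the quotient as the desired isometric isomorphism, reducing everything to the infimum characterization of $\|\cdot\|_{(1)}$ already established in Proposition \ref{Jxi.1.1.inf}. First I would recall that $T_\xi$ maps $\mathcal{C}_c(G)$ \emph{onto} $\mathcal{C}_\xi(G,N)$ (Proposition 6.1 of \cite{FollH}) and that, by its very definition, the kernel of $T_\xi:\mathcal{C}_c(G)\to\mathcal{C}_\xi(G,N)$ is exactly $\mathcal{N}_\xi(G,N)$. Hence, by the first isomorphism theorem for vector spaces, $T_\xi$ descends to a well-defined linear bijection
$$\widetilde{T_\xi}:\mathfrak{X}_\xi(G,N)\to\mathcal{C}_\xi(G,N),\qquad \widetilde{T_\xi}(f+\mathcal{N}_\xi):=T_\xi(f).$$
Here well-definedness and injectivity both follow from the equivalence $f-f'\in\mathcal{N}_\xi \Leftrightarrow T_\xi(f)=T_\xi(f')$, while surjectivity is precisely the cited surjectivity of $T_\xi$.

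It then remains only to check that $\widetilde{T_\xi}$ is isometric, that is, that $\|T_\xi(f)\|_{(1)}=\|f+\mathcal{N}_\xi\|_{\mathfrak{X}_\xi}$ for every $f\in\mathcal{C}_c(G)$. The key observation I would record is that the coset $f+\mathcal{N}_\xi$ coincides exactly with the fiber of $T_\xi$ over $T_\xi(f)$: indeed, $h\in f+\mathcal{N}_\xi$ means $h=f+g$ with $g\in\mathcal{N}_\xi(G,N)$, which by linearity of $T_\xi$ is equivalent to $T_\xi(h)=T_\xi(f)$. Consequently the defining infimum for the quotient norm can be rewritten over this fiber, namely
$$\|f+\mathcal{N}_\xi\|_{\mathfrak{X}_\xi}=\inf\left\{\|h\|_{L^1(G)}:h\in\mathcal{C}_c(G),\ T_\xi(h)=T_\xi(f)\right\}.$$

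Finally I would apply Proposition \ref{Jxi.1.1.inf} with $\psi:=T_\xi(f)\in\mathcal{C}_\xi(G,N)$, which asserts precisely that this last infimum equals $\|\psi\|_{(1)}=\|T_\xi(f)\|_{(1)}$. Combining the two displays yields the claimed equality of norms, so $\widetilde{T_\xi}$ is an isometric isomorphism. I do not expect a serious obstacle here, since the genuine analytic content—that the infimum of $L^1$-norms over the fiber is actually attained and equals $\|\cdot\|_{(1)}$—has already been isolated in Proposition \ref{Jxi.1.1.inf}; the only point requiring care is the matching of the two descriptions of the minimizing set, the coset $f+\mathcal{N}_\xi$ versus the fiber $\{h:T_\xi(h)=T_\xi(f)\}$, which is exactly the set-theoretic equivalence recorded above.
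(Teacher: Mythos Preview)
Your proposal is correct and follows essentially the same approach as the paper: the paper defines the induced map $U_\xi(f+\mathcal{N}_\xi):=T_\xi(f)$ using surjectivity of $T_\xi$ (Proposition 6.1 of \cite{FollH}) and then verifies it is isometric by rewriting the quotient norm as the infimum over the fiber $\{h:T_\xi(h)=T_\xi(f)\}$ and invoking Proposition \ref{Jxi.1.1.inf}, exactly as you do. Your write-up is slightly more explicit about well-definedness and injectivity, but the argument is the same.
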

\begin{proof}
Invoking Proposition 6.1 of \cite{FollH}, the linear operator $T_\xi:\mathcal{C}_c(G)\to \mathcal{C}_\xi(G,N)$ is surjective. Therefore, we deduce that the linear space $\mathcal{C}_\xi(G,N)$ is isomorphic to the quotient linear space $\mathfrak{X}_\xi(G,N)$ via the linear operator $U_\xi:\mathfrak{X}_\xi(G,N)\to \mathcal{C}_\xi(G,N)$ defined by $U_\xi(f+\mathcal{N}_\xi):=T_\xi(f)$ for every $f\in\mathcal{C}_c(G)$. The algebraic isomorphism $U_\xi$ is isometric as well, if the quotient linear space $\mathfrak{X}_\xi(G,N)$ is equipped with the quotient norm (\ref{Q.norm}). To show this, using Proposition \ref{Jxi.1.1.inf}, for every $f\in\mathcal{C}_c(G)$, we achieve  
\begin{align*}
\|U_\xi(f+\mathcal{N}_\xi)\|_{(1)}
&=\|T_\xi(f)\|_{(1)}
\\&=\inf\{\|h\|_{L^1(G)}:T_\xi(h)=T_\xi(f)\}
\\&=\inf\{\|h\|_{L^1(G)}:h-f\in\mathcal{N}_\xi\}
\\&=\inf\{\|f+g\|_{L^1(G)}:g\in\mathcal{N}_\xi\}=\|f+\mathcal{N}_\xi\|_{\mathfrak{X}_\xi}.
\end{align*}
\end{proof}

\section{\bf The Banach Space $L^1_\xi(G,N)$}

In this section, we study further properties of the $L^1$-spaces of covariant functions of characters of normal subgroups. 
Throughout, we still suppose that $G$ is a locally compact group and $N$ is a closed normal subgroup of $G$. Let $\lambda_{G}$ be a left Haar measure on $G$ and $\lambda_N$ be a left Haar measure on $N$. We also assume that $\lambda_{G/N}$ is the left Haar measure on the factor (quotient) group $G/N$ normalized with respect to Weil's formula (\ref{6}). Let $\xi\in\chi(N)$ and $L^1_\xi(G,N)$ be the Banach completion of the normed linear space $\mathcal{C}_\xi(G,N)$ with respect to $\|\cdot\|_{(1)}$ given by (\ref{(1)}). We shall use the completion norm by $\|\cdot\|_{(1)}$ as well. 

The following characterization of $L^1_\xi(G,N)$ is also a canonical consequence of Theorem \ref{X1L1.M} and construction of the Banach space $\mathfrak{X}_\xi^1(G,N)$.

\begin{proposition}\label{X1L1}
{\it Let $G$ be a locally compact group, $N$ be a closed normal subgroup of $G$, and $\xi\in\chi(N)$. Then, $\mathfrak{X}^1_\xi(G,N)$ is isometrically  isomorphic to the Banach space $L^1_\xi(G,N)$.
}\end{proposition}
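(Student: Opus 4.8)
The plan is to leverage Theorem \ref{X1L1.M} together with the universal property of Banach completions. Theorem \ref{X1L1.M} already supplies an isometric isomorphism $U_\xi:\mathfrak{X}_\xi(G,N)\to(\mathcal{C}_\xi(G,N),\|\cdot\|_{(1)})$ between the two \emph{normed} spaces prior to completion, where $\mathfrak{X}_\xi(G,N)$ carries the quotient norm $\|\cdot\|_{\mathfrak{X}_\xi}$ of \eqref{Q.norm}. Since by construction $L^1_\xi(G,N)$ is the Banach completion of $(\mathcal{C}_\xi(G,N),\|\cdot\|_{(1)})$ and $\mathfrak{X}^1_\xi(G,N)$ is the Banach completion of $\mathfrak{X}_\xi(G,N)$, the statement reduces to the standard fact that an isometric isomorphism between normed spaces extends uniquely to an isometric isomorphism between their completions.

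First I would record the general functional-analytic lemma: if $X$ and $Y$ are normed spaces with completions $\widehat{X}$ and $\widehat{Y}$, and $T:X\to Y$ is an isometric isomorphism, then $T$ admits a unique continuous extension $\widehat{T}:\widehat{X}\to\widehat{Y}$, and $\widehat{T}$ is again an isometric isomorphism. The existence of $\widehat{T}$ follows because $T$, being isometric, is uniformly continuous and carries Cauchy sequences to Cauchy sequences; one then defines $\widehat{T}$ at a limit point of $X$ as the limit of the images, the isometry of $T$ ensuring this is well defined and norm-preserving. Applying the identical reasoning to $T^{-1}$ yields a two-sided inverse, so $\widehat{T}$ is a surjective isometry.

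Then I would apply this lemma to $T=U_\xi$. Because $U_\xi$ is an isometric isomorphism by Theorem \ref{X1L1.M}, its completion $\widehat{U_\xi}:\mathfrak{X}^1_\xi(G,N)\to L^1_\xi(G,N)$ is an isometric isomorphism, which is precisely the asserted conclusion.

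I do not anticipate a genuine obstacle, as the proposition is a formal consequence of the isometry built in Theorem \ref{X1L1.M}. The only point demanding slight care is to confirm that the two completions are taken with respect to the norms that $U_\xi$ actually intertwines, namely $\|\cdot\|_{\mathfrak{X}_\xi}$ on the quotient side and $\|\cdot\|_{(1)}$ on the covariant side; since Theorem \ref{X1L1.M} gives $\|U_\xi(f+\mathcal{N}_\xi)\|_{(1)}=\|f+\mathcal{N}_\xi\|_{\mathfrak{X}_\xi}$ exactly, the completions correspond without any rescaling, and the isometry passes verbatim to $\widehat{U_\xi}$.
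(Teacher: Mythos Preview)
Your proposal is correct and matches the paper's own approach: the paper does not give an explicit proof but simply states that the proposition is a canonical consequence of Theorem~\ref{X1L1.M} together with the construction of $\mathfrak{X}_\xi^1(G,N)$ as the completion of $\mathfrak{X}_\xi(G,N)$, which is precisely the completion argument you spell out.
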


\begin{remark}\label{func.real}
The abstract space $L^1_\xi(G,N)$ can be identified with a linear space of complex-valued functions on $G$, where two functions are identified when they are equal locally almost everywhere (l.a.e). Let $\mathfrak{A}\in L^1_\xi(G,N)$ be an equivalent class of $\|\cdot\|_{(1)}$-Cauchy sequences in $\mathcal{C}_\xi(G,N)$. Suppose $(\psi_n)_{n=1}^\infty$ is a representative for $\mathfrak{A}$. So $(\psi_n)_{n=1}^\infty$ is a $\|\cdot\|_{(1)}$-Cauchy sequence in $\mathcal{C}_\xi(G,N)$ and $\|\mathfrak{A}\|_{(1)}=\lim_n\|\psi_n\|_{(1)}$. Then choose a subsequence $(\psi_{\epsilon(k)})$ 
such that $\|\psi_{\epsilon(k+1)}-\psi_{\epsilon(k)}\|_{(1)}\le1/2^k$, for $k\in \mathbb{N}$. This implies that 
$\sum_{k=1}^\infty\|\psi_{\epsilon(k+1)}-\psi_{\epsilon(k)}\|_{(1)}<\infty$.
Hence, $\sum_{k=1}^\infty\|\Psi_k\|_{L^1(G/N)}<\infty$ with $\Psi_k:=|\psi_{\epsilon(k+1)}-\psi_{\epsilon(k)}|$ for $k\in\mathbb{N}$. Since $L^1(G/N)$ is a Banach space, there exists $\Psi\in L^1(G/N)$ with $\Psi=\sum_{k=1}^\infty\Psi_k$ in $L^1(G/N)$, where the sum converges pointwise almost everywhere as well. Thus, 
\[
\sum_{k=1}^\infty|\psi_{\epsilon(k+1)}(x)-\psi_{\epsilon(k)}(x)|=\sum_{k=1}^\infty\Psi_k(xN)<\infty,
\]
for $xN\in G/N$ except those in a subset set $E$ of $G/N$ with $\lambda_{G/N}(E)=0$. So, the complex series $\sum_{k=1}^\infty\psi_{\epsilon(k+1)}(x)-\psi_{\epsilon(k)}(x),$ converges to a complex number, denoted by $\psi_0(x)$, for $x\in G$ with $xN\not\in E$. For $x\in G$ with $xN\notin E$, let $\psi(x):=\psi_{\epsilon(1)}(x)+\psi_0(x)$. Further, $\psi(x)$ is independent of the choice of the Cauchy sequence $(\psi_n)$ as a representative of the class $\mathfrak{A}$, since locally null sets in $G$ 
are precisely the images under the canonical projection of the locally null sets in $G/N$, Theorem 3.3.28 of \cite{50}. The class $\mathfrak{A}$ can be uniquely determined with the locally almost everywhere defined function $x\mapsto\psi(x)$, as a complex valued function on $G$. 
Hence, we get 
\[
\lim_{n}\psi_{\epsilon(n)}(x)=\lim_n\left(\psi_{\epsilon(1)}(x)+\sum_{k=1}^{n-1}\psi_{\epsilon(k+1)}(x)-\psi_{\epsilon(k)}(x)\right)=\psi(x).
\]
Under this identification, for each $n$, we have 
\[
|\psi_{\epsilon(n)}(x)|\le|\psi_{\epsilon(1)}(x)|+\sum_{k=1}^{n-1}\Psi_k(x)\le |\psi_{\epsilon(1)}(x)|+\Psi(x).
\]
Then Lebesgue's Dominated Convergence Theorem, implies that the function $xN\mapsto |\psi(x)|$ belongs to $L^1(G/N)$ and 
\begin{equation}
\|\mathfrak{A}\|_{(1)}=\lim_{n}\|\psi_{\epsilon(n)}\|_{(1)}=\int_{G/N}|\psi(x)|\dd\lambda_{G/N}(xN).
\end{equation}
\end{remark}

Invoking the structure of the Banach space  $L^1_\xi(G,N)$ and Theorem \ref{Jxi.1}, we conclude that the linear operator  
$T_\xi:(\mathcal{C}_c(G),\|\cdot\|_{L^1(G)})\to(\mathcal{C}_\xi(G,N),\|\cdot\|_{(1)})$ has a unique extension to a contraction from $L^1(G)$ into $L^1_\xi(G,N)$, which we still denote this unique bounded linear operator by $T_\xi:L^1(G)\to L^1_\xi(G,N)$.

We then demonstrate the following explicit formula for $T_\xi:L^1(G)\to L^1_\xi(G,N)$. 

\begin{theorem}\label{J.xi.L1}
Let $G$ be a locally compact group, $N$ be a closed normal subgroup of $G$, and $\xi\in\chi(N)$. The extended linear operator $T_\xi:L^1(G)\to L^1_\xi(G,N)$ is given by $f\mapsto T_\xi(f)$, where  
\begin{equation}\label{Txi.L1.formula}
T_\xi(f)(x)=\int_Nf(xs)\overline{\xi(s)}\dd\lambda_N(s),\ {\rm for\ \ l.a.e.}\ x\in G.
\end{equation}
\end{theorem}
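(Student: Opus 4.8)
The plan is to verify that the naive pointwise recipe (\ref{Txi.L1.formula}) produces a genuine element of $L^1_\xi(G,N)$ and then to identify it with the abstract continuous extension supplied just before the statement. First I would fix $f\in L^1(G)$ and introduce the candidate $\widetilde{T}_\xi(f)(x):=\int_N f(xs)\overline{\xi(s)}\dd\lambda_N(s)$. Since $|\overline{\xi(s)}|=1$, Weil's formula (\ref{6}) applied to $|f|$ gives $\int_{G/N}\big(\int_N|f(xs)|\dd\lambda_N(s)\big)\dd\lambda_{G/N}(xN)=\|f\|_{L^1(G)}<\infty$, so the inner integral $T_N(|f|)(xN)$ is finite for $\lambda_{G/N}$-almost every coset $xN$. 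Because locally null sets of $G$ are exactly the $\mathrm{q}$-preimages of locally null sets of $G/N$ (the fact cited in Remark \ref{func.real}), this shows that $\widetilde{T}_\xi(f)(x)$ is absolutely convergent, hence well defined, for l.a.e. $x\in G$, and that $\|\widetilde{T}_\xi(f)\|_{(1)}\le\|f\|_{L^1(G)}$.

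Next I would check that $\widetilde{T}_\xi(f)$ is covariant, so that it lies in the function realization of $L^1_\xi(G,N)$ described in Remark \ref{func.real}. For $k\in N$, the substitution $s\mapsto k^{-1}s$ together with left invariance of $\lambda_N$ and the identity $\overline{\xi(k^{-1})}=\xi(k)$ (valid since $\xi$ is $\mathbb{T}$-valued) yields $\widetilde{T}_\xi(f)(xk)=\xi(k)\widetilde{T}_\xi(f)(x)$ for l.a.e. $x$, which is exactly the covariance property defining $\mathcal{C}_\xi(G,N)$. The crucial estimate is that $\widetilde{T}_\xi$ is itself a contraction on all of $L^1(G)$: by linearity and the bound used above, for any $g\in L^1(G)$ one has $|\widetilde{T}_\xi(g)(x)|\le T_N(|g|)(xN)$ for l.a.e. $x$, and integrating over $G/N$ via Weil's formula gives $\|\widetilde{T}_\xi(g)\|_{(1)}\le\|g\|_{L^1(G)}$.

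Finally I would match $\widetilde{T}_\xi(f)$ with the abstract extension $T_\xi(f)$. Choosing $f_n\in\mathcal{C}_c(G)$ with $\|f-f_n\|_{L^1(G)}\to0$, on $\mathcal{C}_c(G)$ the recipe (\ref{Txi.L1.formula}) is precisely the original definition of $T_\xi$, so $\widetilde{T}_\xi(f_n)=T_\xi(f_n)$. Applying the contraction estimate to $g=f-f_n$ gives $\|\widetilde{T}_\xi(f)-\widetilde{T}_\xi(f_n)\|_{(1)}\le\|f-f_n\|_{L^1(G)}\to0$, hence $T_\xi(f_n)\to\widetilde{T}_\xi(f)$ in $L^1_\xi(G,N)$; but $T_\xi(f_n)\to T_\xi(f)$ as well, by continuity of the extension established through Theorem \ref{Jxi.1}. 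By uniqueness of limits in the Banach space $L^1_\xi(G,N)$ I conclude $T_\xi(f)=\widetilde{T}_\xi(f)$, which is the asserted formula. The hard part will be the purely measure-theoretic bookkeeping needed to make this rigorous: showing that the abstract l.a.e.-realization of $T_\xi(f)$ from Remark \ref{func.real} coincides pointwise l.a.e. with $\widetilde{T}_\xi(f)$, which rests again on the correspondence between locally null sets upstairs and downstairs and, if one wants pointwise agreement rather than mere norm identity, on passing to a rapidly convergent subsequence so that $\|\cdot\|_{(1)}$-convergence can be upgraded to l.a.e. convergence.
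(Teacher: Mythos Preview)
Your proposal is correct and follows essentially the same route as the paper's proof. The paper also fixes $f\in L^1(G)$, uses Weil's formula to see that $\int_N|f(xs)|\dd\lambda_N(s)<\infty$ for a.e.\ $xN$ (so the right-hand side of (\ref{Txi.L1.formula}) makes sense l.a.e.), chooses $(f_n)\subset\mathcal{C}_c(G)$ rapidly convergent to $f$ in $L^1(G)$, and then compares two limits of $T_\xi(f_n)(x)$: one is $T_\xi(f)(x)$ via the function realization of Remark \ref{func.real}, the other is $\int_Nf(xs)\overline{\xi(s)}\dd\lambda_N(s)$ via Weil's formula applied to $|f_n-f|$. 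The only organizational difference is that you first package the estimate $|\widetilde{T}_\xi(g)(x)|\le T_N(|g|)(xN)$ as a global contraction and deduce equality in $L^1_\xi(G,N)$ by uniqueness of limits, whereas the paper goes straight to pointwise l.a.e.\ convergence along the rapidly convergent sequence; you correctly flag that the latter step is exactly what is needed to upgrade norm identity to the pointwise l.a.e.\ statement in the theorem. Your explicit check of covariance of $\widetilde{T}_\xi(f)$ is harmless but not needed in the paper's version.
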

\begin{proof}
Suppose that $f\in L^1(G)$ is given. For $x\in G$, let $f^x:N\to\mathbb{C}$ be given by $f^x(s):=f(xs)$ for $s\in N$. Invoking Theorem 3.4.6 of \cite{50}, there exists a $\lambda_{G/N}$-null set $A$ in $G/N$ such that $f^x\in L^1(N)$ for every $x\in G$ with $xN\not\in A$. Since any $\lambda_{G/N}$-null set is a locally $\lambda_{G/N}$-null set, using Theorem 3.3.28 of \cite{50}, we conclude that $\mathrm{q}^{-1}(A)$ is  locally $\lambda_G$-null in $G$.
So $f^x\in L^1(N)$ for every $x\in G$ with $x\not\in\mathrm{q}^{-1}(A)$.
This implies that the right hand side of (\ref{Txi.L1.formula}) is well-defined as a function on $G$, for l.a.e $x\in G$. 
Let $(f_n)\subset\mathcal{C}_c(G)$ with $\|f_n-f\|_{L^1(G)}<2^{-(n+1)}$ for every $n\in\mathbb{N}$. Then $\|f_{n+1}-f_n\|_{L^1(G)}< 2^{-n}$ for $n\in \mathbb{N}$. We also have $T_\xi(f)=\lim_nT_\xi(f_n)$ in $L^1_\xi(G,N)$. Let $\psi_n=T_\xi(f_n)$ for $n\in\mathbb{N}$. Invoking Theorem \ref{Jxi.1}, we get 
$\|\psi_{n+1}-\psi_n\|_{(1)}<2^{-n}$ for $n\in\mathbb{N}$. 
This implies that $\lim_n\psi_{n}(x)=T_\xi(f)(x)$ for l.c.a $x\in G$, according to Remark \ref{func.real}. 
Using Weil's formula (\ref{6}), for $n\in\mathbb{N}$, we obtain  
\[
\int_{G/N}\int_N|f_{n}(xs)-f(xs)|\dd\lambda_{N}(s)\dd\lambda_{G/N}(xN)=\|f_{n}-f\|_{L^1(G)},
\]
which implies that 
\[
\lim_n\int_N|f_{n}(xs)-f(xs)|\dd\lambda_{N}(s)=0,
\]
for a.e. $xN\in G/N$. Since for $n\in\mathbb{N}$ and $x\in G$, we have  
\begin{align*}
\left|\int_Nf_{n}(xs)\overline{\xi(s)}\dd\lambda_{N}(s)-\int_Nf(xs)\overline{\xi(s)}\dd\lambda_{N}(s)\right|
&\le\int_N|f_{n}(xs)-f(xs)|\dd\lambda_{N}(s),
\end{align*}
we conclude that 
\begin{equation}\label{limN.main}
\lim_n\int_Nf_{n}(xs)\overline{\xi(s)}\dd\lambda_{N}(s)=\int_Nf(xs)\overline{\xi(s)}\dd\lambda_{N}(s),
\end{equation}
for a.e. $xN\in G/N$.
Using (\ref{limN.main}), for l.c.a $x\in G$, we achieve 
\begin{align*}
T_\xi(f)(x)&=\lim_nT_\xi(f_{n})(x)=\lim_n\int_Nf_{n}(xs)\overline{\xi(s)}\dd\lambda_{N}(s)
=\int_Nf(xs)\overline{\xi(s)}\dd\lambda_{N}(s).
\end{align*}
\end{proof}

Assume that $\mathcal{N}_\xi^1(G,N)$ is the kernel of the linear map $T_\xi:L^1(G)\to L^1_\xi(G,N)$ in $L^1(G)$, which is the linear subspace 
\[
\mathcal{N}_\xi^1(G,N):=\left\{f\in L^1(G):T_\xi(f)=0\ {\rm in}\ L^1_\xi(G,N)\right\}.
\]
We then show that $\mathcal{N}_\xi^1(G,N)$ is the closure of $\mathcal{N}_\xi(G,N)$ in $L^1(G)$.

\begin{proposition}\label{N1}
{\it Let $G$ be a locally compact group, $N$ be a closed normal subgroup of $G$, and $\xi\in\chi(N)$. Then, $\mathcal{N}_\xi^1(G,N)$ is the closure of $\mathcal{N}_\xi(G,N)$ in $L^1(G)$.
}\end{proposition}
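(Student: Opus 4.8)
The plan is to establish the two inclusions $\overline{\mathcal{N}_\xi(G,N)}\subseteq\mathcal{N}_\xi^1(G,N)$ and $\mathcal{N}_\xi^1(G,N)\subseteq\overline{\mathcal{N}_\xi(G,N)}$ separately. For the first inclusion, I would note that $\mathcal{N}_\xi^1(G,N)$ is the kernel of the bounded (indeed contractive, by Theorem \ref{Jxi.1}) linear operator $T_\xi:L^1(G)\to L^1_\xi(G,N)$, hence closed in $L^1(G)$. Moreover, if $f\in\mathcal{C}_c(G)$ satisfies $T_\xi(f)=0$ as a continuous function, then $T_\xi(f)=0$ in $L^1_\xi(G,N)$ as well, so $\mathcal{N}_\xi(G,N)\subseteq\mathcal{N}_\xi^1(G,N)$. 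Taking closures and using that the right-hand side is closed gives $\overline{\mathcal{N}_\xi(G,N)}\subseteq\mathcal{N}_\xi^1(G,N)$. This direction is the routine half.

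The substantive direction is $\mathcal{N}_\xi^1(G,N)\subseteq\overline{\mathcal{N}_\xi(G,N)}$. Given $f\in\mathcal{N}_\xi^1(G,N)$, I would use density of $\mathcal{C}_c(G)$ in $L^1(G)$ to choose $(f_n)\subset\mathcal{C}_c(G)$ with $\|f_n-f\|_{L^1(G)}\to0$. Since $T_\xi$ is a contraction and $T_\xi(f)=0$, this gives $\|T_\xi(f_n)\|_{(1)}=\|T_\xi(f_n)-T_\xi(f)\|_{(1)}\le\|f_n-f\|_{L^1(G)}\to0$. Each $f_n$ need not lie in $\mathcal{N}_\xi(G,N)$, so the key step is to correct it by subtracting a small preimage of $T_\xi(f_n)$ without disturbing the approximation.

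To carry out the correction, I would invoke the infimum characterization of Proposition \ref{Jxi.1.1.inf}: because $T_\xi(f_n)\in\mathcal{C}_\xi(G,N)$, there exists $g_n\in\mathcal{C}_c(G)$ with $T_\xi(g_n)=T_\xi(f_n)$ and $\|g_n\|_{L^1(G)}\le\|T_\xi(f_n)\|_{(1)}+1/n$. Setting $h_n:=f_n-g_n\in\mathcal{C}_c(G)$, linearity of $T_\xi$ yields $T_\xi(h_n)=T_\xi(f_n)-T_\xi(g_n)=0$, so $h_n\in\mathcal{N}_\xi(G,N)$. The triangle inequality then gives $\|f-h_n\|_{L^1(G)}\le\|f-f_n\|_{L^1(G)}+\|g_n\|_{L^1(G)}$, where the first term tends to $0$ by construction and the second tends to $0$ since $\|g_n\|_{L^1(G)}\le\|T_\xi(f_n)\|_{(1)}+1/n\to0$. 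Hence $f\in\overline{\mathcal{N}_\xi(G,N)}$, which completes the argument.

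I expect the only delicate point to be the application of Proposition \ref{Jxi.1.1.inf} to manufacture the correcting functions $g_n$ with $L^1$-norm controlled by $\|T_\xi(f_n)\|_{(1)}$; this is precisely where the earlier quotient-norm identity does the work. Everything else is the standard closed-kernel-plus-density scheme, so no further obstacle is anticipated.
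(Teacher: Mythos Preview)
Your proposal is correct and follows essentially the same approach as the paper: both prove the nontrivial inclusion by approximating $f\in\mathcal{N}_\xi^1(G,N)$ by some $h\in\mathcal{C}_c(G)$ and then subtracting a preimage of $T_\xi(h)$ whose $L^1$-norm equals (or is close to) $\|T_\xi(h)\|_{(1)}$. The only cosmetic difference is that the paper rebuilds this preimage explicitly as $u\cdot T_\xi(h)$ via Lemma~2.47 of Folland, whereas you invoke Proposition~\ref{Jxi.1.1.inf} as a black box---which is cleaner, since that proposition already encapsulates the same construction.
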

\begin{proof}
Suppose that $\mathcal{X}$ denotes the closure of $\mathcal{N}_\xi(G,N)$ in $L^1(G)$. Let $f\in\mathcal{X}$ be arbitrary. Then, $f\in L^1(G)$ and 
$\lim_nf_n=f$ for some sequence $(f_n)\subset\mathcal{N}_\xi(G,N)$. 
Then continuity of the linear operator $T_\xi:L^1(G)\to L^1_\xi(G,N)$ guarantees that $T_\xi(f)=0$ in $L^1_\xi(G,N)$. Therefore, $f\in \mathcal{N}_\xi^1(G,N)$. Since $f$ was given, we get $\mathcal{X}\subseteq\mathcal{N}_\xi^1(G,N)$. Conversely, suppose $f\in L^1(G)$ with $T_\xi(f)=0$ in $L^1_\xi(G,N)$.
Let $\varepsilon>0$ be given. Pick $h\in\mathcal{C}_c(G)$ such that $\|f-h\|_{L^1(G)}<\varepsilon/2$ and define $\phi:=T_\xi(h)$. Let $u\in\mathcal{C}_c(G)$ be a non-negative function with $T_N(u)=1$ on $\mathrm{q}(\mathrm{supp}(\phi))$, according to Lemma 2.47 of \cite{FollH}. We then consider the function $g:G\to\mathbb{C}$ defined by $g(x):=h(x)-u(x)\phi(x)$, for every $x\in G$. Then, $g\in\mathcal{C}_c(G)$ and $T_\xi(g)=0$. Because, for every $x\in G$, we obtain  
\[
T_\xi(g)(x)=\phi(x)-\int_Nu(xs)\phi(xs)\overline{\xi(s)}d\lambda_N(s)=\phi(x)-\phi(x)T_N(u)(xN)=0.
\]
Hence, we conclude that $g\in\mathcal{N}_\xi(G,N)$. Then, using Weil's formula (\ref{6}), we achieve  
\begin{align*}
\|h-g\|_{L^1(G)}&=\int_Gu(x)|\phi(x)|\dd\lambda_G(x)=\int_{G/N}T_N(u|\phi|)\dd\lambda_G(x)
\\&=\int_{G/N}T_N(u)(xN)|\phi(x)|\dd\lambda_G(x)=\int_{G/N}|\phi(x)|\dd\lambda_G(x)
\\&=\|T_\xi(h)\|_{(1)}
\le\|T_\xi(h-f)\|_{(1)}+\|T_\xi(f)\|_{(1)}\le\|h-f\|_{L^1(G)}<\frac{\varepsilon}{2}.
\end{align*}
Therefore, we have
\[
\|f-g\|_{L^1(G)}\le\|f-h\|_{L^1(G)}+\|h-g\|_{L^1(G)}<\varepsilon,
\]
which implies $f\in\mathcal{X}$. Since $f\in\mathcal{N}_\xi^1(G,N)$ was arbitrary, we get $\mathcal{N}_\xi^1(G,N)\subseteq\mathcal{X}$.
\end{proof}

Next we have the following characterization of $L^1_\xi(G,N)$ as a quotient subspace of $L^1(G)$. 

\begin{theorem}\label{L1.main}
{\it Let $G$ be a locally compact group, $N$ be a closed normal subgroup of $G$, and $\xi\in\chi(N)$. The Banach space $L_\xi^1(G,N)$ is isometrically isomorphic to the quotient Banach space $L^1(G)/\mathcal{N}_\xi^1(G,N)$.
}\end{theorem}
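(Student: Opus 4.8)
The plan is to show that the extended contraction $T_\xi\colon L^1(G)\to L^1_\xi(G,N)$ of Theorem \ref{J.xi.L1} descends to the desired isometric isomorphism, so that the result is essentially the Banach-space first isomorphism theorem made isometric. First I would introduce the induced linear map
\[
\widetilde{T}_\xi\colon L^1(G)/\mathcal{N}_\xi^1(G,N)\to L^1_\xi(G,N),\qquad \widetilde{T}_\xi(f+\mathcal{N}_\xi^1(G,N)):=T_\xi(f),
\]
which is well defined and injective precisely because $\mathcal{N}_\xi^1(G,N)$ is the kernel of $T_\xi$. Since $T_\xi$ is a contraction, for every $f\in L^1(G)$ and every $g\in\mathcal{N}_\xi^1(G,N)$ we have $\|T_\xi(f)\|_{(1)}=\|T_\xi(f+g)\|_{(1)}\le\|f+g\|_{L^1(G)}$; taking the infimum over such $g$ shows that $\widetilde{T}_\xi$ is itself a contraction with respect to the quotient norm.

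The core of the argument is the reverse inequality, and it suffices to verify it on the dense subspace $\mathcal{C}_c(G)$. For $f\in\mathcal{C}_c(G)$ write $\psi:=T_\xi(f)\in\mathcal{C}_\xi(G,N)$. The condition $T_\xi(h)=\psi$ for $h\in\mathcal{C}_c(G)$ is equivalent to $h-f\in\mathcal{N}_\xi(G,N)$, so Proposition \ref{Jxi.1.1.inf} can be rewritten as $\|\psi\|_{(1)}=\inf\{\|f+g\|_{L^1(G)}:g\in\mathcal{N}_\xi(G,N)\}$. Because the distance from a point to a subspace equals the distance to its closure, and $\mathcal{N}_\xi^1(G,N)=\overline{\mathcal{N}_\xi(G,N)}$ by Proposition \ref{N1}, this infimum coincides with the quotient norm $\|f+\mathcal{N}_\xi^1(G,N)\|$. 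Hence $\widetilde{T}_\xi$ is isometric on the image of $\mathcal{C}_c(G)$ in the quotient.

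To conclude I would pass the isometry to the whole quotient by continuity: given an arbitrary class $F$, approximate it by classes of functions in $\mathcal{C}_c(G)$ (using density of $\mathcal{C}_c(G)$ in $L^1(G)$ together with continuity of the quotient projection) and let $n\to\infty$ in the equality $\|\widetilde{T}_\xi(f_n+\mathcal{N}_\xi^1(G,N))\|_{(1)}=\|f_n+\mathcal{N}_\xi^1(G,N)\|$, so that $\widetilde{T}_\xi$ is isometric everywhere. Surjectivity then comes for free: the quotient $L^1(G)/\mathcal{N}_\xi^1(G,N)$ is complete, so its isometric image under $\widetilde{T}_\xi$ is a closed subspace of $L^1_\xi(G,N)$; since that image already contains $T_\xi(\mathcal{C}_c(G))=\mathcal{C}_\xi(G,N)$, which is dense in $L^1_\xi(G,N)$ by construction of the completion, the image must be all of $L^1_\xi(G,N)$.

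The step I expect to require the most care is the reduction to the dense subspace and its compatibility with the closure: one must ensure that the infimum of Proposition \ref{Jxi.1.1.inf}, which is taken over $\mathcal{N}_\xi(G,N)$, genuinely agrees with the quotient norm computed over the larger closed space $\mathcal{N}_\xi^1(G,N)$. This is exactly where Proposition \ref{N1} and the elementary fact that the distance to a set equals the distance to its closure intervene. Everything else is routine density-and-completeness bookkeeping.
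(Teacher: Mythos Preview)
Your argument is correct and rests on the same ingredients as the paper's proof: Proposition~\ref{Jxi.1.1.inf} for the isometry on the dense level and Proposition~\ref{N1} to replace $\mathcal{N}_\xi(G,N)$ by its closure $\mathcal{N}_\xi^1(G,N)$. The only difference is packaging. The paper routes the argument through the intermediate space $\mathfrak{X}_\xi^1(G,N)$ (the completion of $\mathcal{C}_c(G)/\mathcal{N}_\xi(G,N)$), first invoking Proposition~\ref{X1L1} to identify $L^1_\xi(G,N)\cong\mathfrak{X}_\xi^1(G,N)$ and then citing Lemma~3.4.4 of \cite{50} (completion of a quotient equals the quotient of the completion by the closure) together with Proposition~\ref{N1} to identify $\mathfrak{X}_\xi^1(G,N)\cong L^1(G)/\mathcal{N}_\xi^1(G,N)$. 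You instead work directly with the extended map $T_\xi$ and carry out by hand exactly the density-and-closure argument that Lemma~3.4.4 of \cite{50} encapsulates. Your version is more self-contained; the paper's is shorter because it outsources the completion step to a reference.
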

\begin{proof}
Invoking Proposition \ref{N1} and also Lemma 3.4.4 of \cite{50}, one can conclude that the Banach space $\mathfrak{X}_\xi^1(G,N)$ is isometrically isomorphic to the quotient Banach space $L^1(G)/\mathcal{N}_\xi^1(G,N)$. Then, Proposition \ref{X1L1} guarantees that the Banach space $L_\xi^1(G,N)$ is isometrically isomorphic to the quotient Banach space $L^1(G)/\mathcal{N}_\xi^1(G,N)$.
\end{proof}

\begin{corollary}
{\it Let $G$ be a locally compact group and $N$ be a closed normal subgroup of $G$. Suppose $\xi\in\chi(N)$ and $\psi\in L^1_\xi(G,N)$. We then have 
\[
\|\psi\|_{(1)}=\inf\left\{\|f\|_{L^1(G)}:f\in L^1(G), \ T_\xi(f)=\psi\right\}.
\]
}\end{corollary}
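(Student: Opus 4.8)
The plan is to read this corollary as a direct translation of the isometric isomorphism established in Theorem~\ref{L1.main}. By that theorem, the map $U_\xi\colon L^1(G)/\mathcal{N}^1_\xi(G,N)\to L^1_\xi(G,N)$ induced by $T_\xi$, namely $U_\xi(f+\mathcal{N}^1_\xi)=T_\xi(f)$, is a surjective isometric isomorphism. First I would fix $\psi\in L^1_\xi(G,N)$ and use surjectivity to choose some $f_0\in L^1(G)$ with $T_\xi(f_0)=\psi$; this guarantees that the set $\{f\in L^1(G):T_\xi(f)=\psi\}$ over which the infimum is taken is nonempty, so the right-hand side is well defined.

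The core of the argument is simply to unwind the definition of the quotient norm. Since $U_\xi$ is an isometry, we have $\|\psi\|_{(1)}=\|f_0+\mathcal{N}^1_\xi\|_{L^1(G)/\mathcal{N}^1_\xi}$, and by definition of the quotient norm this equals $\inf\{\|f_0+g\|_{L^1(G)}:g\in\mathcal{N}^1_\xi(G,N)\}$. The key observation is that the affine set $\{f_0+g:g\in\mathcal{N}^1_\xi(G,N)\}$ is exactly the fiber $\{f\in L^1(G):T_\xi(f)=\psi\}$: indeed, $T_\xi(f_0+g)=T_\xi(f_0)+T_\xi(g)=\psi+0=\psi$ for every $g$ in the kernel $\mathcal{N}^1_\xi(G,N)$, and conversely if $T_\xi(f)=\psi=T_\xi(f_0)$ then $f-f_0\in\mathcal{N}^1_\xi(G,N)$ by linearity, so $f=f_0+(f-f_0)$ lies in the affine set. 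Substituting this identification into the infimum yields precisely
\[
\|\psi\|_{(1)}=\inf\left\{\|f\|_{L^1(G)}:f\in L^1(G),\ T_\xi(f)=\psi\right\},
\]
which is the claimed formula.

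I do not anticipate a genuine obstacle here, since the statement is essentially the $L^1$-level analogue of Proposition~\ref{Jxi.1.1.inf} (which handled the dense subspace $\mathcal{C}_\xi(G,N)$ with $\mathcal{C}_c(G)$) and follows formally from Theorem~\ref{L1.main} together with the standard description of a quotient norm. The only points requiring a word of care are that the extended operator $T_\xi\colon L^1(G)\to L^1_\xi(G,N)$ is the continuous linear extension discussed before Theorem~\ref{J.xi.L1}, so linearity and the kernel description used above are legitimate, and that the infimum is taken over a nonempty set by surjectivity of $U_\xi$. With those observations in place the proof reduces to matching the fiber of $T_\xi$ over $\psi$ with a coset of $\mathcal{N}^1_\xi(G,N)$ and invoking the isometry.
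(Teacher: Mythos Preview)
Your proposal is correct and is exactly the argument the paper intends: the corollary is stated without proof immediately after Theorem~\ref{L1.main}, and your derivation---identifying the fiber $T_\xi^{-1}(\psi)$ with a coset of $\mathcal{N}_\xi^1(G,N)$ and reading off the quotient norm via the isometric isomorphism---is precisely the unpacking of that theorem which the paper leaves implicit.
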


\begin{corollary}
{\it Let $G$ be a locally compact group, $N$ be a closed normal subgroup of $G$, and $\xi\in\chi(N)$. The extended contraction $T_\xi$ maps $L^1(G)$ onto $L^1_\xi(G,N)$.
}\end{corollary}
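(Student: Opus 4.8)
The plan is to exhibit, for an arbitrary $\psi\in L^1_\xi(G,N)$, an explicit preimage $f\in L^1(G)$ with $T_\xi(f)=\psi$. The quickest route is to read off surjectivity from Theorem \ref{L1.main}: the isometric isomorphism constructed there is the one induced by $T_\xi$, in the sense that it identifies the coset $f+\mathcal{N}^1_\xi(G,N)$ with $T_\xi(f)$. Consequently $T_\xi$ factors as the composition of the canonical quotient map $L^1(G)\to L^1(G)/\mathcal{N}^1_\xi(G,N)$ with that isomorphism, and being a composition of two surjections it is onto. The step needing care on this route is to confirm that the isomorphism of Theorem \ref{L1.main} really is $T_\xi$-induced, i.e.\ that tracing through Proposition \ref{X1L1}, Proposition \ref{N1}, and Lemma 3.4.4 of \cite{50} carries $f+\mathcal{N}_\xi$ to $T_\xi(f)$ on the dense subspace and hence everywhere by continuity. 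This is bookkeeping, but it must be verified.

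To keep the argument self-contained I would instead run a direct series argument resting on completeness of $L^1(G)$ and the metric-surjectivity encoded in Proposition \ref{Jxi.1.1.inf}. Since $L^1_\xi(G,N)$ is by construction the completion of $(\mathcal{C}_\xi(G,N),\|\cdot\|_{(1)})$, the subspace $\mathcal{C}_\xi(G,N)$ is dense. So given $\psi$ I first choose $\psi_n\in\mathcal{C}_\xi(G,N)$ with $\psi_n\to\psi$ and, after passing to a subsequence, with $\|\psi_{n+1}-\psi_n\|_{(1)}\le 2^{-n}$. Setting $\phi_1:=\psi_1$ and $\phi_n:=\psi_n-\psi_{n-1}$ for $n\ge 2$ gives $\psi=\sum_{n}\phi_n$ in $L^1_\xi(G,N)$ with $\sum_n\|\phi_n\|_{(1)}<\infty$, and each $\phi_n$ again lies in $\mathcal{C}_\xi(G,N)$.

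The crucial step is the lifting. By Proposition \ref{Jxi.1.1.inf}, $\|\phi_n\|_{(1)}=\inf\{\|g\|_{L^1(G)}:g\in\mathcal{C}_c(G),\ T_\xi(g)=\phi_n\}$, so for each $n$ I can select $f_n\in\mathcal{C}_c(G)$ with $T_\xi(f_n)=\phi_n$ and $\|f_n\|_{L^1(G)}\le\|\phi_n\|_{(1)}+2^{-n}$. Then $\sum_n\|f_n\|_{L^1(G)}<\infty$, so by completeness of $L^1(G)$ the series $f:=\sum_n f_n$ converges in $L^1(G)$. Finally, continuity of the extended contraction $T_\xi:L^1(G)\to L^1_\xi(G,N)$ yields $T_\xi(f)=\sum_n T_\xi(f_n)=\sum_n\phi_n=\psi$, placing $\psi$ in the range.

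The only genuine obstacle is this lifting step, and it is exactly what the metric-surjection property of Proposition \ref{Jxi.1.1.inf} supplies: it lets me choose norm-controlled preimages $f_n$ whose $L^1$-norms are summable, which is precisely what completeness of $L^1(G)$ requires in order to manufacture a true preimage of the limit $\psi$ rather than merely establishing that $T_\xi$ has dense range. Everything else—the density of $\mathcal{C}_\xi(G,N)$, the telescoping decomposition of $\psi$, and the passage of $T_\xi$ through the convergent series—is routine.
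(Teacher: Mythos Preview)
Your proposal is correct. The paper states the result as an unproved corollary of Theorem~\ref{L1.main}, so your first route---reading off surjectivity from the identification $L^1_\xi(G,N)\cong L^1(G)/\mathcal{N}_\xi^1(G,N)$, after noting that this isomorphism is the one induced by $T_\xi$---is exactly the intended argument. Your remark that one must check the isomorphism is $T_\xi$-induced is fair: tracing Theorem~\ref{X1L1.M}, Proposition~\ref{X1L1}, Proposition~\ref{N1}, and Lemma~3.4.4 of \cite{50} shows that on $\mathcal{C}_c(G)$ the map $f+\mathcal{N}_\xi\mapsto T_\xi(f)$ is the isometry $U_\xi$, and continuity extends this to $L^1(G)$.

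Your second, self-contained route is a genuine alternative. Instead of invoking the quotient-space isomorphism, you use Proposition~\ref{Jxi.1.1.inf} directly as a lifting device: it supplies norm-controlled preimages $f_n\in\mathcal{C}_c(G)$ of a telescoping decomposition of $\psi$, and completeness of $L^1(G)$ assembles them into a preimage $f$. This is essentially the standard proof that a contraction whose restriction to a dense subspace is a metric surjection remains surjective onto the completion of the target. The advantage is that it bypasses the bookkeeping of Theorem~\ref{L1.main} and Lemma~3.4.4 of \cite{50} entirely, relying only on Proposition~\ref{Jxi.1.1.inf} and the continuity of the extended $T_\xi$. The paper's route is shorter once Theorem~\ref{L1.main} is in hand, but your direct argument makes the mechanism of surjectivity more transparent and would work equally well in any setting where the analogue of Proposition~\ref{Jxi.1.1.inf} is available.
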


Let $L^\infty(G)$ be the Banach space of all locally $\lambda_G$-measurable functions $f:G\to\mathbb{C}$ which are bounded except on a locally $\lambda_G$-null set, modulo functions which are zero locally a.e. on $G$, equipped with the norm
\[
\|f\|_\infty:=\inf\{t: |f(x)|\le t\ \ {\rm l.a.e.}\ \ x\in G\}.
\]
Then $\mathcal{C}_\xi(G,N)\subset L^\infty(G)$. Suppose that $L^\infty_\xi(G,N)$ is the closed subspace of $L^\infty(G)$ defined by  
\[
L^\infty_\xi(G,N):=\left\{\psi\in L^\infty(G):R_k\psi=\xi(k)\psi,\ {\rm for}\ k\in N\right\}.
\]
Then $\mathcal{C}_\xi(G,N)\subseteq L^\infty_\xi(G,N)$ and hence the closure of $\mathcal{C}_\xi(G,N)$ in $L^\infty(G)$ is included in $L^\infty(G,N)$.

We conclude the paper by the following characterization for the dual space $L^1_\xi(G,N)^*$.

\begin{theorem}
{\it Let $G$ be a locally compact group, $N$ be a closed normal subgroup of $G$, and $\xi\in\chi(N)$. Then, $L^1_\xi(G,N)^*$ is isometrically isomorphic to $L^\infty_\xi(G,N)$. 
}\end{theorem}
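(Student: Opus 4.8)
The plan is to combine the quotient description of $L^1_\xi(G,N)$ from Theorem \ref{L1.main} with the standard Banach-space duality between a quotient and an annihilator. Since $L^1_\xi(G,N)$ is isometrically isomorphic to $L^1(G)/\mathcal{N}^1_\xi(G,N)$, and since for any closed subspace $M$ of a Banach space $X$ the adjoint of the quotient map is an isometric isomorphism of $(X/M)^*$ onto the annihilator $M^\perp\subseteq X^*$, I would first record the isometric identifications
\[
L^1_\xi(G,N)^* \cong \big(L^1(G)/\mathcal{N}^1_\xi(G,N)\big)^* \cong \mathcal{N}^1_\xi(G,N)^\perp \subseteq L^1(G)^* = L^\infty(G).
\]
Because the annihilator of a set and of its closed linear span coincide, Proposition \ref{N1} gives $\mathcal{N}^1_\xi(G,N)^\perp=\mathcal{N}_\xi(G,N)^\perp$. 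Thus the whole theorem reduces to the single identity $\mathcal{N}_\xi(G,N)^\perp=L^\infty_\xi(G,N)$ inside $L^\infty(G)$, where I use the pairing $\langle f,\Phi\rangle=\int_G f\overline{\Phi}\,\dd\lambda_G$ that extends the one already fixed on $\mathcal{C}_c(G)\times\mathcal{C}_\xi(G,N)$.

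For the inclusion $L^\infty_\xi(G,N)\subseteq\mathcal{N}_\xi(G,N)^\perp$ I would argue directly with Weil's formula. Given $\Phi\in L^\infty_\xi(G,N)$ and $f\in\mathcal{C}_c(G)$, the product $f\overline{\Phi}$ lies in $L^1(G)$, so \eqref{6} yields $\langle f,\Phi\rangle=\int_{G/N}T_N(f\overline{\Phi})(xN)\,\dd\lambda_{G/N}(xN)$. Using $R_k\Phi=\xi(k)\Phi$, i.e. $\overline{\Phi(xs)}=\overline{\xi(s)}\,\overline{\Phi(x)}$, the inner integral collapses to $T_N(f\overline{\Phi})(xN)=\overline{\Phi(x)}\,T_\xi(f)(x)$, a genuinely $N$-invariant quantity because $T_\xi(f)$ is covariant and $|\xi|=1$. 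Hence $\langle f,\Phi\rangle=\int_{G/N}\overline{\Phi(x)}\,T_\xi(f)(x)\,\dd\lambda_{G/N}(xN)$, which vanishes whenever $T_\xi(f)=0$, i.e. whenever $f\in\mathcal{N}_\xi(G,N)$; continuity of the pairing then extends this to the closure.

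The reverse inclusion $\mathcal{N}_\xi(G,N)^\perp\subseteq L^\infty_\xi(G,N)$ is where the real work lies. Here I would exploit \eqref{Rf.xif}: every element $R_kf-\Delta_N(k^{-1})\xi(k)f$ belongs to $\mathcal{N}_\xi(G,N)$, so any $\Phi$ in the annihilator satisfies $\langle R_kf,\Phi\rangle=\Delta_N(k^{-1})\xi(k)\langle f,\Phi\rangle$ for all $k\in N$ and $f\in\mathcal{C}_c(G)$. A change of variable together with the right-translation formula $\int_G f(yx)\,\dd\lambda_G(y)=\Delta_G(x^{-1})\int_G f\,\dd\lambda_G$ rewrites the left-hand side as $\Delta_G(k^{-1})\langle f,R_{k^{-1}}\Phi\rangle$. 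Invoking the identity $\Delta_G(k)=\Delta_N(k)$ for $k\in N$ cancels the modular factors and leaves $\langle f,R_{k^{-1}}\Phi\rangle=\xi(k)\langle f,\Phi\rangle=\langle f,\overline{\xi(k)}\Phi\rangle$ for every $f$, whence $R_{k^{-1}}\Phi=\overline{\xi(k)}\Phi$ and, replacing $k$ by $k^{-1}$, $R_k\Phi=\xi(k)\Phi$ for each $k\in N$. This is exactly membership in $L^\infty_\xi(G,N)$. The main obstacle I anticipate is this last paragraph: keeping the modular functions and the complex conjugations straight so that the covariance weight comes out as $\xi(k)$ (and not $\overline{\xi(k)}$ or a stray $\Delta$-factor), which is precisely why the equalities $\Delta_G|_N=\Delta_N$ and $|\xi|=1$ must be used at the right moments. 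Once the set equality $\mathcal{N}_\xi(G,N)^\perp=L^\infty_\xi(G,N)$ is in hand, the isometry is automatic from the annihilator-duality theorem, so no separate norm computation is needed.
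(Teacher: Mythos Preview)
Your proposal is correct and follows essentially the same route as the paper's own proof: both use Theorem~\ref{L1.main} together with the standard quotient/annihilator duality to reduce to showing $\mathcal{N}_\xi^1(G,N)^\perp=L^\infty_\xi(G,N)$, and both verify the two inclusions via Weil's formula (for $L^\infty_\xi\subseteq\mathcal{N}_\xi^\perp$) and via \eqref{Rf.xif} combined with $\Delta_G|_N=\Delta_N$ (for the reverse inclusion). The only cosmetic differences are that you invoke Proposition~\ref{N1} to pass to $\mathcal{N}_\xi(G,N)^\perp$ before computing, and you treat the inclusions in the opposite order; the modular-function bookkeeping you flagged as the delicate step is handled identically in the paper.
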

\begin{proof}
Let $g\mapsto \Lambda_g$ be the antilinear isometric isomorphism identification of $L^\infty(G)$ as $L^1(G)^*$, where $\Lambda_g:L^1(G)\to\mathbb{C}$ is given by $\Lambda_g(f):=\langle f,g\rangle$ for every $f\in L^1(G)$. Using Theorem \ref{L1.main}, the dual space 
$L^1_\xi(G,N)^*$ is isometrically isomorphic to $\mathcal{N}_\xi^1(G,N)^\perp$, where 
\[
\mathcal{N}_\xi^1(G,N)^\perp=\left\{g\in L^\infty(G):\Lambda_g(f)=0,\ \ {\rm for\ all}\ f\in \mathcal{N}_\xi^1(G,N)\right\}.
\]
We then show that $\mathcal{N}_\xi^1(G,N)^\perp=L^\infty_\xi(G,N)$. To this end, suppose that $g\in\mathcal{N}_\xi^1(G,N)^\perp$ is arbitrary. Then, $g\in L^\infty(G)$ and $\Lambda_g(f)=0$ for every $f\in \mathcal{N}_\xi^1(G,N)$. Let $k\in N$ be given. Then, using (\ref{Rf.xif}) and Proposition \ref{N1}, for every $f\in\mathcal{C}_c(G)$ we get $R_{k^{-1}}f-\Delta_N(k)\overline{\xi(k)}f\in\mathcal{N}_\xi^1(G,N)$. So $\Lambda_g(R_{k^{-1}}f)=\Delta_N(k)\overline{\xi(k)}\Lambda_g(f)$ and hence 
$\Delta_N(k^{-1})\Lambda_g(R_{k^{-1}}f)=\overline{\xi(k)}\Lambda_g(f)$, for every $f\in\mathcal{C}_c(G)$. Since $N$ is normal, we have $\Delta_G|_N=\Delta_N$. Therefore, for every $f\in\mathcal{C}_c(G)$, we achieve 
\begin{align*}
\Lambda_{R_kg}(f)&=\int_Gf(x)\overline{g(xk)}\dd\lambda_G(x)
\\&=\int_Gf(xk^{-1})\overline{g(x)}\dd\lambda_G(xk^{-1})
\\&=\Delta_G(k^{-1})\int_Gf(xk^{-1})\overline{g(x)}\dd\lambda_G(x)
\\&=\Delta_N(k^{-1})\int_Gf(xk^{-1})\overline{g(x)}\dd\lambda_G(x)
\\&=\Delta_N(k^{-1})\Lambda_g(R_{k^{-1}}f)=\overline{\xi(k)}\Lambda_g(f)=\Lambda_{\xi(k)g}(f).
\end{align*} 
Since $f\in\mathcal{C}_c(G)$ is given, we obtain $R_kg=\xi(k)g$ in $L^\infty(G)$. Because $k\in N$ was also arbitrary, we get $g\in L^\infty_\xi(G,N)$. Conversely, let $g\in L^\infty_\xi(G,N)$ be given. We then claim that $\Lambda_g(f)=0$ for every  $f\in \mathcal{N}_\xi^1(G,N)$. To this end, suppose that $f\in \mathcal{N}_\xi^1(G,N)$ is given. Then
\begin{align*}
\Lambda_g(f)&=\int_G f(x)\overline{g(x)}\dd\lambda_G(x)
\\&=\int_{G/N}\left(\int_Nf(xs)\overline{g(xs)}\dd\lambda_N(s)\right)\dd\lambda_{G/N}(xN)
\\&=\int_{G/N}\left(\int_Nf(xs)\overline{\xi(s)}\dd\lambda_N(s)\right)\overline{g(x})\dd\lambda_{G/N}(xN)
\\&=\int_{G/N}T_\xi(f)(x)\overline{g(x})\dd\lambda_{G/N}(xN)=0,
\end{align*}
implying that $g\in \mathcal{N}_\xi^1(G,N)^\perp$.
\end{proof}

{\bf Acknowledgement.}
This project has received funding from the European Union’s Horizon 2020 research and innovation programme under the Marie Sklodowska-Curie grant agreement No. 794305. The author gratefully acknowledges the supporting agency. The findings and opinions expressed here are only those of the author, and not of the funding agency.\\
The author would like to express his deepest gratitude to Vladimir V. Kisil for suggesting the problem that motivated the results in this article, 
stimulating discussions and pointing out various references. 

\bibliographystyle{amsplain}

\end{document}